\newtheorem{theorem}{Theorem}%[section]
\newtheorem{lemma}{Lemma}
\newtheorem{corollary}[theorem]{Corollary}
\newtheorem{proposition}{Proposition}
\newtheorem{conjecture}{Conjecture}
\renewcommand*\backref[1]{}
\renewcommand*\backrefalt[4]{ \ifcase #1 \or (cited on page #2) \else (cited on pages #2) \fi}%(no citations)
\newcommand{\be}{\begin{equation}}
\newcommand{\ee}{\end{equation}}
\newcommand{\bea}{\begin{eqnarray}}
\newcommand{\eea}{\end{eqnarray}}
\newcommand{\vs}{\vspace{0.5cm}}
\def\XXint#1#2#3{{\setbox0=\hbox{$#1{#2#3}{\int}$ }
\vcenter{\hbox{$#2#3$ }}\kern-.6\wd0}}
\begin{document}

\title[On Strominger space forms]{On Strominger space forms}

\author{Shuwen Chen}
\address{Shuwen Chen. School of Mathematical Sciences, Chongqing Normal University, Chongqing 401331, China}
\email{{3153017458@qq.com}}\thanks{Zheng is partially supported by National Natural Science Foundations of China
with the grant No.12071050 and  12141101, Chongqing grant cstc2021ycjh-bgzxm0139, and is supported by the 111 Project D21024.}

\author{Fangyang Zheng}
\address{Fangyang Zheng. School of Mathematical Sciences, Chongqing Normal University, Chongqing 401331, China}
\email{20190045@cqnu.edu.cn; franciszheng@yahoo.com} \thanks{}

\subjclass[2010]{53C55 (primary), 53C05 (secondary)}
\keywords{Strominger connection, holomorphic sectional curvature, Strominger space forms.}

\begin{abstract}
In this article, we propose the following conjecture: if the Strominger connection of a compact Hermitian manifold  has constant non-zero holomorphic sectional curvature, then the Hermitian metric must be K\"ahler. The main result of this article is to confirm the conjecture in dimension $2$. We also verify the conjecture in higher dimensions in a couple of special situations.
\end{abstract}

\maketitle

\tableofcontents

\markleft{Shuwen Chen and Fangyang Zheng}
\markright{Strominger space forms}

\section{Introduction and statement of results}\label{intro}

In Riemannian geometry, the simplest Riemannian manifolds are the {\em space forms,} namely, complete Riemannian manifolds with constant sectional curvature. They are respectively quotients of the sphere $S^n$, the Euclidean space ${\mathbb R}^n$, and the hyperbolic space ${\mathbb H}^n$, equipped with (scaling of) the standard metrics.

In the complex case, one could no longer require the sectional curvature to be constant (unless it is flat). Instead, one requires the {\em holomorphic sectional curvature} to be constant. Complete K\"ahler manifolds with constant holomorphic sectional curvature are called {\em complex space forms}. They are quotients of complex projective space ${\mathbb C}{\mathbb P}^n$, the complex Euclidean space ${\mathbb C}^n$, and the complex hyperbolic space ${\mathbb C}{\mathbb H}^n$, equipped with (scaling of) the standard metrics.

Note that by the classic Schur's Lemma, if the sectional curvature of a Riemannian  manifold of dimension $\geq 3$ is pointwise constant, then it must be constant. Similarly, if the holomorphic sectional curvature of a K\"ahler manifold of complex dimension$\geq 2$ is pointwise constant, then it must be constant. So in the above definition of space forms or complex space forms, one could require the curvature to be {\em pointwise constant} instead.

Given a Hermitian manifold $(M^n,g)$, there are three canonical connections associated with $g$ that are widely studied: the Levi-Civita (Riemannian) connection $\nabla$, the Chern connection $\nabla^c$, and the Strominger (also known as Bismut) connection $\nabla^s$. The last connection was considered by Strominger \cite{Strominger} in 1986 and independently by Bismut \cite{Bismut} in 1989. When $g$ is K\"ahler, these three connections coincide, but when $g$ is not K\"ahler, they are mutually distinct.

For studies on these connections and non-K\"ahler Hermitian geometry in general, we refer the readers to \cite{AI}, \cite{AOUV}, \cite{AU}, \cite{Belgun}, \cite{Belgun1}, \cite{EFV}, \cite{FT}, \cite{FinoTomassini}, \cite{FinoVezzoni}, \cite{Fu}, \cite{FuYau}, \cite{FuLiYau}, \cite{Fu-Zhou}, \cite{GHR}, \cite{Gauduchon}, \cite{Gauduchon1}, \cite{Gray}, \cite{IP}, \cite{Zheng}, and the references therein.

Let us denote by $R$, $R^c$, $R^s$ the curvature tensor of the connections $\nabla$, $\nabla^c$, $\nabla^s$, respectively. The holomorphic sectional curvature $H$ of $\nabla$ is defined by
$$ H(X) = \frac{R_{X\overline{X} X\overline{X} } }{|X|^4}$$
where $X$ is any non-zero complex tangent vector of type $(1,0)$.  $H^c$ and $H^s$ can be similarly defined, replacing $R$ by $R^c$ or $R^s$, respectively. For each of these three canonical connections, a natural question is to understand the corresponding (weak) space forms, namely, those Hermitian manifolds with (pointwise) constant holomorphic sectional curvature $H$ (or $H^c$, or $H^s$).

Note that in the non-K\"ahler case, we no longer have Schur's Lemma, so the pointwise constancy of holomorphic sectional curvature does not imply constancy in general, at least for non-compact manifolds. Nonetheless, a long term folklore conjecture in non-K\"ahler geometry is the following:

\begin{conjecture} \label{conj1}
Let $(M^n,g)$ be a compact Hermitian manifold with $n\geq 2$. Assume that $H=c$ (or $H^c=c$) where $c$ is a constant. If $c\neq 0$, then $g$ is K\"ahler, and if $c=0$, then $R=0$ (or $R^c=0$).
\end{conjecture}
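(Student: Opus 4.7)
The plan is to attack the Chern case $H^c\equiv c$ first; the Riemannian case $H\equiv c$ is parallel in spirit but technically heavier, since one must simultaneously track the torsion of the Chern connection to translate between $R$ and $R^c$. I would proceed in four steps.

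First, pick a local unitary frame of $T^{1,0}M$ and polarize the scalar identity $R^c_{X\bar X X\bar X}=c|X|^4$ in the complex vector $X$. This yields the symmetric tensor identity
\[
R^c_{i\bar j k\bar\ell}+R^c_{k\bar j i\bar\ell}+R^c_{i\bar\ell k\bar j}+R^c_{k\bar\ell i\bar j}
=2c\bigl(\delta_{ij}\delta_{k\ell}+\delta_{i\ell}\delta_{kj}\bigr).
\]
Second, decompose $R^c$ by the first Bianchi-type identity for the Chern connection,
\[
R^c_{i\bar j k\bar\ell}-R^c_{k\bar j i\bar\ell}=\nabla^c_{\bar j}T^m_{ik}\,g_{m\bar\ell},
\]
and its conjugate, which split $R^c$ into a part $\widehat R$ satisfying the K\"ahler curvature symmetries plus a derivative-torsion correction. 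Substituting into the polarized identity pins $\widehat R$ down pointwise as the constant-$c$ K\"ahler model curvature and produces an algebraic constraint on the first covariant derivatives of $T$.

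Third, contract the resulting identity twice against the metric in the two independent ways $g^{i\bar j}g^{k\bar\ell}$ and $g^{i\bar\ell}g^{k\bar j}$, take a well-chosen linear combination, and integrate over $M$. Passing to the Gauduchon representative in the conformal class of $g$, on which the Lee form $\theta$ satisfies $\delta\theta=0$, one can apply Stokes's theorem to eliminate the divergence terms. The outcome is a global identity of the schematic form
\[
c\cdot\mathrm{Vol}(M,g)=\int_M Q\bigl(T,\nabla T\bigr)\,dV_g,
\]
with $Q$ a Weitzenb\"ock-type combination of squared norms of $T$ and its covariant derivatives. In the case $c=0$ one would read off $T\equiv 0$ pointwise, and then $\widehat R\equiv 0$ together with the vanishing of the torsion correction yields $R^c\equiv 0$; in the case $c\neq 0$ one aims to show that the sign of $Q$ is incompatible with the sign of $c$ unless $T\equiv 0$.

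The decisive obstacle, and the reason this folklore conjecture has remained open in general, is sign control of $Q$: for $n\geq 3$ the two independent trace contractions do not combine into a manifestly definite form without an extra structural hypothesis such as $g$ being balanced or pluriclosed, so a priori the integrand is indefinite. A genuinely new positivity ingredient, or a dimension-two reduction in which the torsion collapses to essentially one complex component (which is the route the paper exploits for its Strominger-connection $n=2$ theorem), appears to be needed. For the Riemannian case $H\equiv c$, the same strategy goes through after expressing $R$ via $R^c$ and $T$, but the additional $\nabla J$-type contributions further aggravate the sign problem, and a proof in the compact Riemannian setting for $n\geq 3$ likely requires ideas beyond those sketched here.
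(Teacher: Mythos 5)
The statement you were asked about is Conjecture \ref{conj1}: the paper does not prove it, and indeed states explicitly that it is a long-standing folklore conjecture, still largely open for $n\geq 3$, with only the $n=2$ case known from the literature (Balas--Gauduchon and Sato--Sekigawa for $c\leq 0$, and Apostolov--Davidov--Muskarov in general, the latter via their classification of compact self-dual Hermitian surfaces rather than by any integral identity). So there is no proof in the paper to compare yours against, and your submission is, by your own account, not a proof either: it is a program. That is the genuine gap. Steps one and two are sound and standard -- polarizing $H^c=c$ into the symmetrization identity for $R^c$, and splitting $R^c$ into a K\"ahler-symmetric part plus first covariant derivatives of the Chern torsion (this is exactly the kind of bookkeeping the paper carries out in Lemma \ref{lemma2} for the Strominger analogue) -- but step three is where the argument stops being an argument. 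After contracting and integrating in the Gauduchon gauge, the resulting quantity $Q(T,\nabla T)$ is not known to have a definite sign for $n\geq 3$, and you acknowledge that a ``genuinely new positivity ingredient'' would be required. Declaring in the $c=0$ case that one ``would read off $T\equiv 0$'' presupposes precisely the positivity you have not established; in fact the known partial results (Chern K\"ahler-like, locally conformally K\"ahler with $c\leq 0$, nilmanifolds, SKL) all succeed only because an extra structural hypothesis supplies that missing definiteness.

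Two smaller cautions. First, your ``dimension-two reduction'' remark conflates the route of the cited $n=2$ proofs with the paper's own Theorem \ref{thm1}: in dimension two the decisive tool is self-duality ($W^-=0$) and the classification of compact self-dual Hermitian surfaces, not a collapse of the torsion into one component inside your integral scheme; the paper's Proposition \ref{prop2} plays the analogous role for $H^s$. Second, the Riemannian case $H=c$ is not merely ``parallel but heavier'': the relation between $R$ and $R^c$ involves additional quadratic torsion terms with mixed signs (compare (\ref{RRc})), so even the pointwise algebra of step two changes, and your sketch gives no indication of how the extra terms would be absorbed. In short, your proposal is a reasonable survey of the standard attack and of why it stalls, but it does not establish the conjecture, and the paper neither claims nor contains a proof of it.
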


Note that compact Chern flat manifolds were classified by Boothby \cite{Boothby} in 1958 as the set of all compact quotients of complex Lie groups (equipped with left-invariant metrics compatible with the complex structure). The classification problem for compact Hermitian manifolds with flat Riemannian (Levi-Civita) connection, on the other hand, is still an open question when  $n\geq 4$. By Bieberbach's Theorem, this amounts to classifying on the flat torus $T^{2n}_{\mathbb R}$ all complex structures compatible  with the flat metric. For $n\leq 2$, only the standard complex structures occur (namely, the complex $n$-tori). For $n\geq 3$, lots of non-K\"ahlerian complex structures appear. The $n=3$ case was solved by Gabriel Khan, Bo Yang and the second named author \cite{KYZ} in 2017.

The above conjecture has also a stronger version, namely, assuming $H=f$ (or $H^c=f$) where $f$ is a function on $M^n$. The conjecture states that when $f$ is not identically zero, then $g$ must be K\"ahler, while when $f$ is identically zero, the conjecture states that $R$ (or $R^c$) must be zero.

In complex dimension $n=2$, the (stronger version of the) conjecture holds. In the $c\leq 0$ case, it was solved by Balas and Gauduchon \cite{BG} (see also \cite{Balas}) in 1985 for $\nabla^c$ and by Sato and Sekigawa \cite{SS} in 1990 for $\nabla$. The general case for $n=2$ (the stronger version, for both $\nabla^c$ and $\nabla$) was solved by Apostolov, Davidov, Muskarov \cite{ADM} in 1996, as a corollary of their beautiful classification theorem for compact self-dual Hermitian surfaces.

For $n\geq 3$, the conjecture is still largely open. The Chern case saw some partial development in recent years. In \cite{Tang} K. Tang  confirmed the conjecture under the additional assumption that $g$ is Chern K\"ahler-like (namely $R^c$ obeys all K\"ahler symmetries). In \cite{CCN}, H. Chen, L. Chen and X. Nie proved the conjecture under the additional assumption that $g$ is locally conformally K\"ahler and $c\leq 0$. In \cite{ZhouZ}, W. Zhou and the second named author proved that any compact balanced threefold with zero {\em real bisectional curvature}, a notion introduced in \cite{YangZ} which is slightly stronger than $H^c$, must be Chern flat. Also, in \cite{LZ} and \cite{RZ}, the second named author and collaborators confirmed the conjecture for $\nabla^c$ under the additional assumption that either $(M^n,g)$ is a complex nilmanifold with nilpotent complex structure $J$ (in the sense of \cite{CFGU}), or $g$ is Strominger K\"ahler-like (namely, $R^s$ obeys all K\"ahler symmetries).

For Strominger connection $\nabla^s$, which is the unique connection on a given Hermitian manifold $(M^n,g)$ satisfying $\nabla^sg=0$, $\nabla^sJ=0$,  and with totally skew-symmetric torsion, one could also ask the question of understanding its spaces forms. Mimicking the above folklore conjecture, we propose the following

\begin{conjecture} \label{conj2}
Let $(M^n,g)$ be a compact Hermitian manifold with $n\geq 2$. Assume that $H^s=c$ where $c$ is a constant. If $c\neq 0$, then $g$ is K\"ahler.
\end{conjecture}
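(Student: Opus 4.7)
The plan is to use the pointwise assumption $H^s\equiv c$ to rigidly constrain the full Strominger curvature tensor $R^s$, then, via Gauduchon's comparison formulas between $\nabla$, $\nabla^c$, and $\nabla^s$ and integration on a Gauduchon representative of the conformal class of $g$, to force the Strominger torsion three-form $T=-d^c\omega$ to vanish when $c\ne 0$.

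First I would polarize $R^s_{X\overline{X}X\overline{X}}=c|X|^4$ in a unitary $(1,0)$-frame to obtain the algebraic identity
\[
R^s_{i\bar j k\bar l}+R^s_{k\bar j i\bar l}+R^s_{i\bar l k\bar j}+R^s_{k\bar l i\bar j}=2c(g_{i\bar j}g_{k\bar l}+g_{i\bar l}g_{k\bar j}).
\]
This determines the part of $R^s$ symmetric under $i\leftrightarrow k$ and $\bar j\leftrightarrow\bar l$, and in particular fixes both of Gauduchon's Strominger scalar curvatures as nonzero universal multiples of $c$. The remaining ``anti-K\"ahler'' skew part $A_{i\bar j k\bar l}:=R^s_{i\bar j k\bar l}-R^s_{k\bar j i\bar l}$ is not controlled by the assumption, but because $\nabla^s$ has totally skew-symmetric torsion, the first Bianchi identity for $\nabla^s$ rewrites $A$ as an explicit linear combination of components of $\nabla^s T$ and a pointwise quadratic $Q(T,T)$ in the torsion. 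Combined with the polarized identity, this expresses every component of $R^s$ in terms of the universal data $(c,g,T,\nabla^s T)$ at each point.

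Next I would invoke Gauduchon's comparison identities to relate the Strominger scalars and first Ricci form $\rho^s$ to their Chern and Riemannian counterparts, with the defect given by quadratic torsion terms plus divergence pieces in the Lee form. Passing to the unique Gauduchon representative of the conformal class of $g$, the divergence terms integrate away on the compact manifold $M$. Substituting the expressions from the previous step together with the fact that the Strominger scalar is a nonzero constant multiple of $c$ should then yield a global identity of the schematic form
\[
c\cdot\!\int_M\Phi(T)\,d\mu=\int_M\!\bigl(\alpha|\nabla^s T|^2+\beta\,Q(T,T)+\gamma|T|^2\bigr)d\mu
\]
with explicit universal constants depending only on $n$. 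The aim is to arrange the trace so that the right-hand side dominates $|c|\cdot\int_M|\Phi(T)|\,d\mu$ when $c\ne 0$, forcing $T\equiv 0$ and therefore $d\omega=0$, i.e., $g$ is K\"ahler.

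The decisive difficulty lies in closing the final step, which is essentially an a priori sign problem. Pointwise constancy of $H^s$ is only one scalar condition per point, whereas in complex dimension $n$ the torsion three-form carries $\binom{2n}{3}$ independent components, so the algebraic identities above are a priori too weak in dimension $n\ge 3$ to pin down $T$ by themselves. In the settled two-dimensional case $T$ is Hodge-dual to the Lee one-form, the bilinear form $\Phi$ collapses to a scalar, and direct integration suffices; in the previously handled Strominger K\"ahler-like case the anti-K\"ahler part $A$ vanishes by hypothesis. For general $n\ge 3$ without additional assumptions, I expect a new Weitzenb\"ock identity for a Dirac-type operator naturally attached to $\nabla^s$ (in the spirit of the Bismut Dirac operator acting on spinors twisted by $T$) will be needed to absorb the quadratic torsion terms and close the integral bound. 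Producing such an identity is the main conceptual obstacle of the proposed approach.
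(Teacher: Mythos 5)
You are attempting to prove a statement that the paper itself leaves as an open conjecture: the authors establish Conjecture \ref{conj2} only for $n=2$ (Theorem \ref{thm1} and Corollary \ref{cor2}) and in two restricted higher-dimensional settings (complex nilmanifolds with nilpotent $J$, Theorem \ref{thm3}, and SKL manifolds, Theorem \ref{thm4}). Your proposal is a strategy sketch rather than a proof, and you concede as much yourself: the entire argument funnels into an integral inequality whose favourable sign you cannot establish, and without that sign nothing forces $T\equiv 0$. That is the genuine gap, and it is not a technicality --- the isosceles Hopf manifolds show that pointwise constancy of $H^s$ is compatible with $T\neq 0$ (with $H^s\equiv 0$ for $g_h$, and with nonconstant pointwise-constant $H^s$ for the admissible metrics $g_A$), so any inequality of your schematic form must degenerate exactly at $c=0$ and exactly on these examples; producing universal constants with that property is the whole difficulty, and you have not produced them. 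A smaller but concrete error: the polarized identity $\widehat{R}^s_{i\bar j k\bar l}=\frac{c}{2}(\delta_{ij}\delta_{kl}+\delta_{il}\delta_{kj})$ does not fix both Strominger scalar curvatures individually; it only pins down the combination $\sum_{i,k}\bigl(R^s_{i\bar i k\bar k}+R^s_{i\bar k k\bar i}\bigr)=cn(n+1)$, so the two scalars remain coupled through the uncontrolled skew part.

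Your description of how the two-dimensional case is ``settled'' also misrepresents the paper's actual route. The paper does not close $n=2$ by direct integration against the Lee form (that only works for the Chern and Levi-Civita analogues when $c\le 0$, as in Balas--Gauduchon and Sato--Sekigawa). Instead, it shows that pointwise constant $H^s$ forces the anti-self-dual Weyl tensor to vanish (Proposition \ref{prop2}, via Lemma \ref{lemma2} and Lemma \ref{lemma3}), invokes the Apostolov--Davidov--Muskarov classification of compact self-dual Hermitian surfaces, and then analyzes each conformal class separately (Propositions \ref{prop3}, \ref{prop4}, and \ref{prop1}) to rule out everything except complex space forms and admissible metrics on isosceles Hopf surfaces. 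If you want a tractable problem along the lines you propose, the honest targets are the special cases the paper actually handles; for general $n\ge 3$ the conjecture remains open, and your Weitzenb\"ock-type identity would itself be the new contribution, not a routine step.
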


Note that the second part of Conjecture \ref{conj1} does not hold for Strominger connection $\nabla^s$, namely, $H^s=0$ does not imply $R^s=0$ in general, as illustrated by isosceles Hopf manifold of dimension $n\geq 3$ which we will see in the next section. So instead one can ask the following question:

\vspace{0.25cm}

\noindent {\bf Question 1.} \label{question1}
\emph{ What kind of compact Hermitian manifold $(M^n,g)$ will have $H^s=0$ but  $R^s\not\equiv 0$?}

\vspace{0.25cm}

We want to understand {\em (weak) Strominger space forms,} which means compact Hermitian manifold $(M^n,g)$ whose $\nabla^s$ has (pointwise) constant holomorphic sectional curvature, namely, $H^s=c$ for a constant $c$ (or $H^s=f$ for a function $f\in C^{\infty }(M)$). When $g$ is K\"ahler, $f$ must be a constant by Schur's Lemma, and $(M^n,g)$ is a complex space form. So the goal is to understand the non-K\"ahler ones.

One class of examples of (non-K\"ahler)  Strominger space forms are the (compact) {\em Strominger flat manifolds,} namely compact Hermitian manifold with $R^s=0$. In this case we certainly have $H^s=0$. Strominger flat manifolds were classified by Qingsong Wang, Bo Yang, and the second named author in \cite{WYZ}, as quotients of {\em Samelson spaces} (namely, even-dimensional Lie groups with bi-invariant metrics and compatible left-invariant complex structures).

In the next section, we shall see that all {\em isosceles Hopf manifolds} satisfy $H^s=0$. They are Strominger flat when and only when $n=2$. So all isosceles Hopf manifolds of dimension $n\geq 3$ satisfy the description in Question \ref{question1}.

The main purpose of this paper is to understand all weak Strominger space forms in dimension $n=2$, and confirm Conjecture \ref{conj2} in the surface case:

\begin{theorem} \label{thm1}
Let $(M^2,g)$ be a compact Hermitian surface with $H^s=f$, where $f$ is a function on $M^2$. Then either $g$ is K\"ahler (in which case $f$ is a constant and the surface is a complex space form), or $M^2$ is an isosceles Hopf surface and $g$ is an admissible metric.
\end{theorem}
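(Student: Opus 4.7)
The plan is to translate the hypothesis $H^s=f$ into explicit pointwise constraints in a unitary frame, and then invoke the classification of compact Hermitian surfaces. In complex dimension two the three-form $d\omega$ is completely encoded in the Lee form $\theta$ via the identity $d\omega=\theta\wedge\omega$, and the torsion of $\nabla^s$ is (up to sign and $J$) this same three-form. Consequently the standard Bismut formula expresses the difference $\nabla^s-\nabla^c$, and hence the full curvature tensor $R^s$, in terms of $R^c$, $\theta$, and $\nabla^c\theta$ (or equivalently $d\theta$ and $\delta\theta$). I would begin by writing this formula out explicitly in a local unitary $(1,0)$-frame $\{E_1,E_2\}$, so that every component $R^s_{i\bar j k\bar l}$ is an explicit expression in the Chern curvature and in the first covariant derivatives of $\theta$.

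Next I would unpack the condition $H^s=f$. Writing $X=aE_1+bE_2$ in $T^{1,0}_pM$, the identity $R^s_{X\bar X X\bar X}=f\,(|a|^2+|b|^2)^2$ is a polynomial identity in the monomials $a^p\bar a^q b^r\bar b^{\,s}$ with $p+r=q+s=2$; collecting coefficients yields a finite list of pointwise algebraic relations among the components $R^s_{i\bar j k\bar l}$. Feeding in the Bismut formula from Step~1 converts these into simultaneous algebraic relations on $R^c_{i\bar j k\bar l}$ together with first-order differential relations on the components of $\theta$. In the K\"ahler case $\theta\equiv 0$, Schur's lemma upgrades $f$ to a constant and $(M^2,g)$ is a complex space form, so the interesting case is $\theta\not\equiv 0$. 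Here the expected outcome of the algebraic analysis is that $|\theta|$ is a positive constant, that $d\theta=0$ (i.e.\ $g$ is locally conformally K\"ahler), and that $R^c$ takes a very specific form on the open set where $\theta\neq 0$.

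Finally, once this local structure is established, I would appeal to the Kodaira classification together with the analysis of compact lcK surfaces with parallel Lee form (in the spirit of \cite{ADM} and the work of Belgun) to conclude that $M^2$ must be an isosceles Hopf surface and $g$ an admissible metric in the sense recalled in the next section. The main obstacle will be the passage from the local pointwise identities to the global conclusion: extracting $d\theta=0$ and the constancy of $|\theta|$ requires careful use of the second Bianchi identity for $\nabla^s$ together with a compactness/maximum-principle argument on $|\theta|^2$, while matching the resulting Chern curvature to an admissible Hopf metric demands the explicit computation of $R^c$ and $R^s$ on isosceles Hopf surfaces (carried out in \S 2) in order to verify that no other compact lcK surface class can satisfy the derived identities.
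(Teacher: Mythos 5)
There is a genuine gap, and it sits exactly where your plan leans on an ``expected outcome.'' You claim that in the non-K\"ahler case the pointwise identities should force $|\theta|$ to be a positive constant and the Lee form to be parallel (so that you can finish with the classification of compact lcK surfaces with parallel Lee form). But this is false for the very examples the theorem must allow: the admissible metrics $g_A=\frac{1}{\xi_A}g_0$ on an isosceles Hopf surface with $A\neq 0$ have pointwise constant $H^s$ (non-constant $f$), are only globally conformal to the Vaisman metric $g_h$, and have non-constant $|\theta|$ and non-parallel Lee form --- indeed $|\theta|^2$ is essentially $\frac{1}{\xi_A}\sum_r|\partial_r\xi_A|^2$, whose non-constancy is exactly the non-constancy of $f$. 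So the structure you hope to extract would exclude genuine solutions, and no amount of Bianchi-identity or maximum-principle work on $|\theta|^2$ can rescue that intermediate claim. Moreover, the condition $H^s=f$ only constrains the symmetrization $\widehat{R}^s_{i\bar j k\bar\ell}=\frac{f}{2}(\delta_{ij}\delta_{k\ell}+\delta_{i\ell}\delta_{kj})$; by itself it does not control the full Chern curvature or first derivatives of $\theta$ in the way your Step~2 presumes, and you give no argument that it does.

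The paper's route supplies precisely the bridge your proposal lacks. One first shows (using the relations between $R$, $R^c$, $R^s$ through the torsion and its first Chern-covariant derivatives) that $H^s=f$ forces $W^-=0$, i.e.\ the surface is self-dual; this is an algebraic computation on symmetrized curvatures, not a parallel-Lee-form statement. Then the Apostolov--Davidov--Muskarov classification of compact self-dual Hermitian surfaces says $g$ is conformal to either a complex space form, the non-flat conformally flat K\"ahler metric on a ${\mathbb P}^1$-bundle over a higher-genus curve, or the standard metric on an isosceles Hopf surface. The remaining work is a conformal analysis: a transformation formula for $R^s$ under $\tilde g=e^{2u}g$ shows that on a compact complex space form the conformal factor must be constant (so $g$ is K\"ahler), rules out the product-type case by a maximum-principle argument on the factor $F$, and in the Hopf case pins $g$ down to the admissible family $g_A$ by solving $\xi_{i\bar j}=\lambda\delta_{ij}$ explicitly. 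If you want to keep your Lee-form framework, you would still need a substitute for the self-duality step and a mechanism that determines the conformal factor within each conformal class; as written, Steps~2 and~3 assert conclusions (constant $|\theta|$, Vaisman structure, reduction to Hopf) that are either unproved or contradicted by the admissible metrics themselves.
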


Recall that Hopf surfaces are compact complex surfaces whose universal cover is ${\mathbb C}^2\setminus \{ 0\}$. An {\em isosceles Hopf surface} means $M^2_{\phi}= {\mathbb C}^2\setminus \{ 0\} / \langle \phi \rangle $, where $\phi(z_1,z_2)=(a_1z_1,a_2z_2)$, with $0<|a_1|=|a_2|<1$. Here $(z_1,z_2)$ is the Euclidean coordinate, and let $g_0$ denotes the Euclidean metric whose K\"ahler form is given by
$$\omega_0 = \sqrt{-1} \big( dz_1\wedge d\overline{z}_1+ dz_2\wedge d\overline{z}_2 \big) = \sqrt{-1} \partial \overline{\partial} |z|^2$$
where $|z|^2= |z_1|^2+ |z_2|^2$. For any symmetric $2\times 2$ matrix $A$ satisfying
\begin{equation} \label{eq:admissible}
A\overline{A}<\frac{1}{4}I, \ \ \ \ \ \ DAD=|a_1|^2A,
\end{equation}
where $D=\mbox{diag}\{a_1, a_2\}$,  consider the function $\xi_A$ on ${\mathbb C}^2\setminus \{ 0\}$ defined by
\begin{equation} \label{eq:xi}
\xi_A = |z|^2 + \,^t\!zAz + \overline{^t\!zAz}.
\end{equation}
Condition (\ref{eq:admissible}) guarantees that $\xi_A>0$ and $\phi^{\ast} \xi_A = |a_1|^2\xi_A$. So the Hermitian metric $g_A=\frac{1}{\xi_A}g_0$ is invariant under $\phi$ hence descends down to $M^2_{\phi}$. A Hermitian metric $g$ on $M^2_{\phi}$ is called an {\em admissible metric,} if it is a constant multiple of $g_A$ for some $A$ satisfying (\ref{eq:admissible}). We will show in \S 2 that the Strominger holomorphic sectional curvature of $g_A$ is
\begin{equation} \label{eq:Hsofxi}
H^s = - \frac{1}{\xi_A} \big( 4\,^t\!zA\,\overline{Az} + \,^t\!zAz + \overline{^t\!zAz}\big),
\end{equation}
so any admissible metric on an isosceles Hopf surface has pointwise constant Strominger holomorphic sectional curvature, hence is a weak space form. The same is true in higher dimensions. When $A=0$, we get the standard Hopf metric $g_h=\frac{1}{|z|^2}g_0$, which has $H^s\equiv 0$. Any admissible metric is globally conformal to $g_h$ on $M^2_{\phi}$ as $\frac{\xi_A}{|z|^2}$ descends down to $M^2_{\phi}$.

Note that for any $A\neq 0$, the $H^s$ of $g_A$ can never be a constant function, therefore we obtain  the following:

\begin{corollary} \label{cor2}
Let $(M^2,g)$ be a compact Hermitian surface with $H^s=c$ where $c$ is a constant. If $c\neq 0$, then $g$ is K\"ahler. If $c=0$, the either $g$ is K\"ahler, or $(M^2,g)$ is an isosceles Hopf surface with the (scaling of) standard metric $g_h$, which has $R^s=0$.
\end{corollary}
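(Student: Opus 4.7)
The plan is to deduce the corollary directly from Theorem \ref{thm1} and the explicit formula (\ref{eq:Hsofxi}). By Theorem \ref{thm1}, the assumption $H^s=c$ (constant) forces either $g$ to be K\"ahler---in which case we are done---or $(M^2,g)$ to be an isosceles Hopf surface equipped with an admissible metric $g_A$ for some symmetric $2\times 2$ matrix $A$ satisfying (\ref{eq:admissible}). I would therefore focus on deciding which admissible $g_A$ can have $H^s$ identically equal to a prescribed constant $c$.

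On the universal cover $\C^2\setminus\{0\}$, I would substitute (\ref{eq:Hsofxi}) and (\ref{eq:xi}) into $H^s=c$ and clear the positive denominator $\xi_A$, obtaining
\begin{equation*}
{}^tz\bigl(4A\overline{A}+cI\bigr)\overline{z} + (1+c)\,{}^tzAz + (1+c)\,\overline{{}^tzAz} = 0
\end{equation*}
for every $z\in\C^2$. Because the three summands have bidegrees $(1,1)$, $(2,0)$ and $(0,2)$ in $(z,\overline{z})$, each must vanish identically (say, by averaging in the $S^1$-action $z\mapsto e^{\I\theta}z$). The $(2,0)$ relation yields $(1+c)\,{}^tzAz\equiv 0$, while the $(1,1)$ relation is equivalent to the matrix identity $A\overline{A}=-\tfrac{c}{4}\,I$.

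The case analysis is then short. If $c\neq 0$ and $c\neq -1$, the $(2,0)$ relation together with the symmetry of $A$ forces $A=0$, whereupon the $(1,1)$ relation collapses to $cI=0$, a contradiction. If $c=-1$, the $(2,0)$ part is automatic but the $(1,1)$ part becomes $A\overline{A}=\tfrac{1}{4}\,I$, in violation of the strict admissibility inequality $A\overline{A}<\tfrac{1}{4}\,I$ in (\ref{eq:admissible}); indeed, at this boundary value one checks that $\xi_A$ acquires zeros on $\C^2\setminus\{0\}$, so $g_A$ would fail to be a metric. Hence no admissible $g_A$ can have nonzero constant $H^s$, which is the first statement of the corollary.

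If $c=0$, the $(2,0)$ relation again forces $A=0$, so $g_A$ reduces, up to a positive constant, to the standard Hopf metric $g_h=|z|^{-2}g_0$. Its Strominger flatness $R^s=0$ in dimension two is the $n=2$ instance already recorded in the preceding discussion, and alternatively follows from the classification of Strominger flat manifolds in \cite{WYZ} since the underlying surface is then a quotient of a Samelson space. The only mildly non-routine point in the argument is the exclusion of the critical value $c=-1$, for which one must invoke the \emph{strict} form of the admissibility condition rather than its non-strict counterpart; the remainder is clean bidegree bookkeeping once Theorem \ref{thm1} and the formula (\ref{eq:Hsofxi}) are granted.
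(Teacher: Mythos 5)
Your argument is correct and is essentially the paper's own proof: the paper likewise combines Theorem \ref{thm1} with formula (\ref{eq:f})/(\ref{eq:Hsofxi}), clears the denominator, separates bidegrees to force $c=-1$ and $A\overline{A}=\tfrac14 I$ when $A\neq 0$ (contradicting the strict inequality in (\ref{eq:admissible})), and concludes $A=0$, $c=0$, with Strominger flatness of $g_h$ for $n=2$ from Lemma \ref{lemma5}. Your only cosmetic difference is organizing the cases by the value of $c$ rather than by $A\neq 0$ versus $A=0$.
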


This confirms Conjecture \ref{conj2} for $n=2$, and also answers Question \ref{question1} in the negative way for $n=2$: in dimension $2$ the only non-K\"ahler compact Hermitian surfaces with $H^s=0$ are isosceles Hopf surfaces, which are Strominger flat.

At this point, we know little about the  higher dimensional situations, and here we will satisfy ourselves with the following two special situations.

\begin{theorem} \label{thm3}
Let $(M^n,g)$ be a complex nilmanifold with nilpotent $J$, namely, the universal covering space is a nilpotent Lie group $G$ equipped with a left-invariant complex structure $J$ and a compatible left-invariant metric. We assume that $J$ is nilpotent in the sense of \cite{CFGU}. If $H^s=f$ where $f$ is a function on $M^n$, then $f\equiv 0$, $g$ is K\"ahler flat,  and $G$ is abelian.
\end{theorem}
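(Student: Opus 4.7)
The plan is to reduce the entire problem to linear algebra on $\mathfrak{g}=\mathrm{Lie}(G)$. Since $g$ and $J$ are left-invariant, so is the Strominger connection $\nabla^s$, so $H^s$ evaluated on left-invariant type $(1,0)$ vector fields does not depend on the basepoint. The pointwise-constancy hypothesis then upgrades automatically to $H^s\equiv c$ for a genuine constant $c\in\R$, and the remainder of the argument is purely Lie-algebraic. By CFGU nilpotency of $J$, fix a $J$-invariant ascending filtration $\{0\}=\mathfrak{a}_0\subset\mathfrak{a}_1\subset\cdots\subset\mathfrak{a}_s=\mathfrak{g}$ with $[\mathfrak{g},\mathfrak{a}_k]\subset\mathfrak{a}_{k-1}$, and choose a unitary $(1,0)$-frame $\{e_1,\ldots,e_n\}$ adapted to it, so that $e_1\in\mathfrak{a}_1^{1,0}$ is central in $\mathfrak{g}_{\C}$.

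First I compute $H^s(e_1)$. Writing out the Koszul-type formula for $\nabla^s$ on a left-invariant Hermitian Lie group expresses $R^s_{1\bar 11\bar 1}$ as a polynomial in the structure constants; every monomial carries at least one factor of the form $[e_1,\cdot]$ or $[\bar e_1,\cdot]$, both of which vanish by centrality. Consequently $c=H^s(e_1)=0$, i.e.\ $H^s\equiv 0$. With this in hand I induct on $k$ to show $[\mathfrak{g},\mathfrak{a}_k]=0$ for every $k$. The base case $k=1$ holds by definition. For the inductive step, assuming $\mathfrak{a}_{k-1}$ is central, pick a vector $e_r\in\mathfrak{a}_k^{1,0}$ with nonzero image in $\mathfrak{a}_k/\mathfrak{a}_{k-1}$. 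The inductive hypothesis kills most terms in the expansion of $H^s(e_r)=0$; what remains, using the totally skew-symmetric Strominger torsion, collapses into an identity of the shape $-\sum_j|\lambda_{rj}|^2=0$, where each $\lambda_{rj}$ encodes the brackets $[e_r,e_j]$ and $[e_r,\bar e_j]$ projecting onto the new stratum. Hence all such brackets vanish, completing the induction. Therefore $\mathfrak{g}$ is abelian, $G\cong\R^{2n}$, $g$ is flat K\"ahler, $M$ is a flat complex torus, and in particular $f\equiv 0$.

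The main obstacle is the sign analysis in the inductive step: verifying that $H^s(e_r)$ really assumes a negative-semidefinite sum-of-squares form once lower-order brackets have been killed. This is specific to the Strominger connection (via $\nabla^sJ=0$ and totally skew torsion) and is not available for the Chern connection, whose analogous treatment in \cite{LZ} requires additional input. In practice, a preliminary lemma giving a closed-form expression for $R^s$ on a left-invariant Hermitian Lie group --- in terms of the Lie bracket and its $g$-adjoint, via the identity $T^s=-d^c\om$ --- will streamline both the evaluation $H^s(e_1)=0$ and the inductive coefficient extraction. A secondary concern is that the whole scheme uses only the algebraic structure of $\mathfrak{g}$, so compactness of $M$ enters the proof only through left-invariance of the data; if desired, one could drop compactness from the hypothesis without altering the argument.
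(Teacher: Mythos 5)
Your reduction to the Lie algebra and your first step are fine: curvature components in a left-invariant unitary frame are constants, so $f$ is a constant, and evaluating at a $(1,0)$ vector $e_1$ spanning (part of) the $J$-invariant center $\mathfrak{a}_1$ does kill every term of $R^s_{1\bar 11\bar 1}$, giving $f=0$; this is consistent with the paper, which instead gets $f=-2\sum_{r>i}|D^i_{ri}|^2$ from (\ref{LieRshatii}) in a CFGU-adapted frame and sets $i=n$. The genuine gap is exactly the step you flag as "the main obstacle," and it is not a mere sign check. Take $e_m\in\mathfrak{a}_k^{1,0}$ orthogonal to $\mathfrak{a}_{k-1}$ and assume $\mathfrak{a}_{k-1}$ central. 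In the diagonal component $R^s_{m\bar mm\bar m}$ the only structure constants that occur are $C^m_{\ast m}$, $C^m_{m\ast}$, $D^m_{\ast m}$, $D^m_{m\ast}$, $D^{\ast}_{mm}$; the coefficients encoding the brackets you need to kill, namely $[e_m,e_j]$ and $[\bar e_j,e_m]$ for general $j$ (these are $C^s_{mj}$ and $D$-terms with $s$ in lower strata), simply do not appear there. What survives of (\ref{LieRshatii}) under your inductive hypothesis is only $0=-2\sum_r|D^m_{rm}|^2$, i.e.\ $[\bar e_m,e_m]=0$ --- one bracket, not centrality of $e_m$. So $H^s(e_m)=0$ alone cannot complete the induction, and the advertised identity "$-\sum_j|\lambda_{rj}|^2=0$ encoding all brackets out of the new stratum" is not what the curvature gives.

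To reach the missing brackets you must polarize, i.e.\ use the mixed symmetrized components $\widehat R^s_{k\bar k i\bar i}=0$ (equivalently $H^s$ on combinations of frame vectors), and there the expression is \emph{not} negative semidefinite: the paper's identity (\ref{eq:nilD}),
\begin{equation*}
\sum_{r<k}|C^k_{ri}-D^i_{kr}|^2+2\sum_{r>k}\bigl(|D^i_{rk}|^2+|D^k_{ri}|^2\bigr)=\sum_{r<k}|D^r_{ki}|^2 \qquad (i<k),
\end{equation*}
has squares on \emph{both} sides, and the cross term $|C^k_{ri}-D^i_{kr}|^2$ mixes $C$ with $D$, so individual brackets are not controlled separately. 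The paper handles this by working in the CFGU-adapted triangular frame (\ref{CFGU}) and running an index-increasing induction ($k=2,3,\ldots$) so that the positive right-hand side is already known to vanish at each stage, first forcing $D=0$ and only then $C=0$. Your filtration-based induction would need an analogous mechanism to dispose of the positively-signed terms; without it, the central step of your argument is unproven, and as written it is based on an identity ($H^s(e_r)=0$ at a single vector) that is too weak to yield the conclusion. The side remark that compactness is never used is correct and agrees with the paper's proof, which is entirely Lie-algebraic.
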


In other words there is no non-K\"ahler Stromionger weak space form amongst complex nilmanifolds with nilpotent $J$.

\begin{theorem} \label{thm4}
Let $(M^n,g)$ be a compact Strominger K\"ahler-like (SKL) manifold with $H^s=f$ where $f$ is a function on $M^n$. Then either $g$ is K\"ahler (hence a complex space form), or $f\equiv 0$ and $g$ is Strominger flat.
\end{theorem}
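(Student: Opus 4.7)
The strategy is: first, convert the pointwise condition $H^s=f$, via the SKL hypothesis and polarization, into an explicit formula for the full tensor $R^s$; then use parallel Strominger torsion and compactness of $M$ to force $df\equiv 0$; finally, use the polarized form with constant $f$ to conclude that $g$ is Kähler (when $f\neq 0$) or $R^s\equiv 0$ (when $f\equiv 0$).

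\emph{Polarization.} Since $R^s$ satisfies all Kähler symmetries under the SKL assumption, the standard polarization of the Kähler curvature tensor of a complex space form applies verbatim to $R^s$. From $R^s_{X\overline{X}X\overline{X}}=f(p)|X|^4$ for every nonzero $(1,0)$-vector $X$ at each $p\in M$, one obtains the pointwise identity
$$R^s_{i\bar j k\bar l}\;=\;\tfrac{f}{2}\bigl(g_{i\bar j}g_{k\bar l}+g_{i\bar l}g_{k\bar j}\bigr).$$
If $f\equiv 0$, this already gives $R^s\equiv 0$, one of the two alternatives of the theorem. Hence it suffices to show that $f\not\equiv 0$ forces $g$ to be Kähler.

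\emph{Constancy of $f$.} Invoke the SKL structural results from \cite{RZ}: $g$ is pluriclosed and $\nabla^s T=0$, and under SKL the first and second Strominger Ricci forms coincide, giving a single closed $(1,1)$-form $\rho^s$. Tracing the polarized formula gives $\rho^s=\tfrac{(n+1)f}{2}\omega$, so $d(f\omega)=0$; wedging with $\omega^{n-2}$ and using the Lee form $\theta$ (defined by $d\omega^{n-1}=\theta\wedge\omega^{n-1}$) produces the pointwise identity $df=-\tfrac{f}{n-1}\theta$. By unique continuation for this first-order linear ODE along curves, $f\not\equiv 0$ forces $f$ to be nowhere zero on connected compact $M$. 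Taking $d$ of the identity and using $\theta\wedge\theta=0$ yields $fd\theta=0$, hence $d\theta=0$. Since $\theta$ is (up to constant) a trace of the parallel torsion $T$, also $\nabla^s\theta=0$; combined with $\nabla^s=\nabla+\tfrac12 T$ and the total skew-symmetry of $T$, this forces $\nabla\theta$ to be antisymmetric and therefore $d^*\theta=0$. So $\theta$ is harmonic; combined with exactness $\theta=-(n-1)d\log|f|$ on compact $M$, this gives $\theta\equiv 0$, hence $df\equiv 0$ and $f\equiv c$ for some nonzero constant $c$.

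\emph{Conclusion.} Now $g$ is a compact, SKL, pluriclosed, balanced ($\theta=0$) metric with $\rho^s=\tfrac{(n+1)c}{2}\omega$ and $c\neq 0$ constant, and $\nabla^s R^s=0$. Plugging the polarized $R^s$ and $\nabla^s T=0$ into the first Bianchi identity with torsion,
$$\mathfrak{S}_{X,Y,Z}\,R^s(X,Y)Z\;=\;\mathfrak{S}_{X,Y,Z}\,T(T(X,Y),Z),$$
yields a purely algebraic identity between the Kähler-symmetric form $\tfrac{c}{2}(g\otimes g+\mathrm{swap})$ on the left and a cyclic sum in $T$ on the right. Using the $(2,1)+(1,2)$-type restriction of $T$ together with tracelessness $\theta=0$, this algebraic identity is incompatible with any nonzero $T$ when $c\neq 0$, so $T\equiv 0$ and $g$ is Kähler; by Schur's Lemma $(M,g)$ is then a complex space form. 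The main obstacle is this last step—verifying that the first-Bianchi algebraic identity between the polarized $R^s$ and the cyclic $T(T,\cdot)$ term really forces $T=0$ when $c\neq 0$; this combines the rigidity of the polarized form of $R^s$, the $(2,1)+(1,2)$-type restriction on $T$, the tracelessness $\theta=0$, and a careful type-decomposition of how the two sides pair.
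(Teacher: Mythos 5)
Your route diverges completely from the paper's, and it contains a genuine gap exactly where you flag it. The polarization step is fine (SKL symmetries do give $R^s_{i\bar j k\bar \ell}=\tfrac{f}{2}(g_{i\bar j}g_{k\bar \ell}+g_{i\bar \ell}g_{k\bar j})$, and $f\equiv 0$ then gives Strominger flatness), and the ``constancy of $f$'' argument is plausible in outline: $\mathrm{tr}\,\Theta^s=d\,\mathrm{tr}\,\theta^s$ is closed, tracing the polarized tensor gives $d(f\omega)=0$, hence $df=-\tfrac{f}{n-1}\theta$, and then $d\theta=0$, exactness of $\theta$, and (using $\nabla^sT=0$, which is a theorem of Zhao--Zheng for SKL metrics, not of \cite{RZ}) coclosedness of $\theta$, forcing $\theta=0$ and $f$ a nonzero constant. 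But the decisive step --- that the first Bianchi identity with parallel skew torsion, $\mathfrak{S}\,R^s(X,Y)Z=\mathfrak{S}\,T(T(X,Y),Z)$, is algebraically incompatible with $T\neq 0$ when $c\neq 0$ --- is only asserted, and you yourself call it ``the main obstacle.'' This is not a routine verification: in the $c=0$ case nonzero parallel $T$ with $\mathfrak{S}\,T(T(X,Y),Z)=0$ does occur (Bismut-flat manifolds), so the whole content of the theorem in your framework is concentrated in that unproven algebraic claim. As it stands the proof is incomplete. A cheaper repair within your own framework: once you know $f$ is a nonzero constant and $\theta=0$, the metric is balanced, and SKL implies pluriclosed (Zhao--Zheng); a Hermitian metric that is simultaneously balanced and pluriclosed is K\"ahler (Alexandrov--Ivanov \cite{AI}), which would finish the argument without the Bianchi computation.

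For comparison, the paper's proof is essentially two lines and rests on a structural input you did not use: by the theorem of \cite{YZZ}, a non-K\"ahler SKL manifold admits local admissible unitary frames with $\nabla^s e_n=0$ and $R^s_{i\bar j k\bar n}=0$ for all $i,j,k$. Since SKL gives $\widehat{R}^s=R^s$ and the hypothesis gives $\widehat{R}^s_{i\bar j k\bar\ell}=\tfrac{f}{2}(\delta_{ij}\delta_{k\ell}+\delta_{i\ell}\delta_{kj})$, the single component $R^s_{n\bar n n\bar n}=0$ forces $f\equiv 0$ pointwise, and then $R^s=\widehat{R}^s=0$. So the curvature degeneracy along the canonical direction $e_n$ replaces your entire global (Ricci-form, Lee-form, Bianchi) analysis; if you want your alternative route on record, you must either prove the final algebraic claim or substitute the balanced-plus-pluriclosed argument indicated above.
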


That is, the only non-K\"ahler Strominger space forms amongst SKL manifolds are the Strominger flat manifolds. SKL manifolds were studied in  \cite{YZZ}, \cite{ZZ}, and \cite{ZZ1}, where classification theorem in $n=2$ and $n=3$  as well as some structural results in general dimensions were obtained. SKL manifolds form an interesting subset of pluriclosed manifolds.

The paper is organized as follows. In \S 2, we will set up the notations and collect some known results from literature, which will prepare us for the proofs of the theorems stated in this introduction. We will also give the computations for isosceles Hopf manifolds. In \S 3 we will proof the main result, Theorem \ref{thm1}, and in \S 4, we will discuss partial results in higher dimensions and prove Theorem \ref{thm3} and \ref{thm4}.

\vspace{0.3cm}

\section{Preliminaries}

Let $(M^n,g)$ be a Hermitian manifold and denote by  $\omega$ the  K\"ahler form associated with $g$. Denote by $\nabla$,  $\nabla^c$, $\nabla^s$ the Levi-Civita (Riemannian), Chern, Strominger (also known as Bismut) connection, respectively.

Fix any $p \in M^n$, let $\{ e_1, \ldots , e_n\} $ be a frame of $(1,0)$-tangent vectors of $M^n$ in a neighborhood of $p$, with $\{ \varphi_1, \ldots , \varphi_n\}$ being its dual coframe of $(1,0)$-forms. We will also write  $e=\,^t\!(e_1, \ldots , e_n)$ and $\varphi = \,^t\!( \varphi_1, \ldots , \varphi_n)$ and view them as column vectors. Let  $\langle \ , \rangle $ be the (real) inner product given by the Hermitian metric $g$, and extend it bi-linearly over ${\mathbb C}$. Following the notations of  \cite{YZ,YZ1}, let us write
\[\begin{cases}\nabla e = \theta_1 e + \overline{\theta_2 }\,\overline{e} \\
\nabla \overline{e} = \theta_2 e + \overline{\theta_1}\,\overline{e}
\end{cases}\]
where the matrices of connection and curvature of $\nabla $ under the frame $\,^t\!(e,\overline{e})$ are given by
$$ \hat{\theta } = \left[ \begin{array}{ll} \theta_1 & \overline{\theta_2 } \\ \theta_2 & \overline{\theta_1 }  \end{array} \right] , \ \  \  \hat{\Theta } = \left[ \begin{array}{ll} \Theta_1 & \overline{\Theta}_2  \\ \Theta_2 & \overline{\Theta}_1   \end{array} \right], $$
 where
\begin{equation}
\label{structure-r}
\left\{ \begin{array}{lll} \Theta_1 = d\theta_1 -\theta_1 \wedge \theta_1 -\overline{\theta_2} \wedge \theta_2,  \\
\Theta_2 = d\theta_2 - \theta_2 \wedge \theta_1 - \overline{\theta_1 } \wedge \theta_2,   \\
d\varphi = - \ ^t\! \theta_1 \wedge \varphi - \ ^t\! \theta_2
\wedge \overline{\varphi } \end{array} \right.
\end{equation}
Similarly, let $\theta$ and $\tau$ be respectively the connection matrix and torsion column vector under $e$ for the Chern connection $\nabla^c$, then the structure equations and Bianchi identities are
\begin{equation}
\label{structure-c} \left\{ \begin{array}{llll} d \varphi = - \ ^t\!\theta \wedge \varphi + \tau,   \\
d  \theta = \theta \wedge \theta + \Theta.  \\
d \tau = - \ ^t\!\theta \wedge \tau + \ ^t\!\Theta \wedge \varphi, \\
d  \Theta = \theta \wedge \Theta - \Theta \wedge \theta. \end{array} \right.
\end{equation}
Finally, let $\theta^s$, $\Theta^s$ be the matrices of connection and curvature under $e$ of the Strominger connection $\nabla^s$, then we have
\begin{equation}
\label{structure-s}
\Theta^s = d\theta^s -\theta^s \wedge \theta^s.
\end{equation}

As is well-known, the entries of the curvature matrix $\Theta$ are all $(1,1)$-forms, while the entries of the column vector $\tau $ are all $(2,0)$-forms, under any frame $e$. Consider the $(2,1)$ tensor $\gamma =\nabla^1-\nabla^c$. Here $\nabla^1$ is the Hermitian projection of $\nabla$, sending $e$ to $\theta_1 e$. It is called {\em Lichnerowicz connection} of $g$ in some literature. For convenience, we will also use $\gamma$ to denote the matrix of $1$-forms representing $\gamma$ under $e$, namely, we also write $\gamma = \theta_1-\theta$. Denote by $\gamma = \gamma' + \gamma ''$ the decomposition into the $(1,0)$ and $(0,1)$-parts. Note that $\overline{\theta_2}$ is the matrix under $e$ of the tensor $\nabla - \nabla^1$.  It follows from \cite{YZ} that when $e$ is unitary, the matrices $\gamma$ and $\theta_2$ are given by
\begin{equation}\label{eq:4}
\gamma_{ij} = \sum_k \{ T^j_{ik}\varphi_k  - \overline{T^i_{jk}} \,\overline{\varphi}_k \} , \ \ \ (\theta_2)_{ij} = \sum_k \overline{T^k_{ij} } \,\varphi_k ,
\end{equation}
where $T^k_{ij}$, satisfying $T^k_{ji}=-T^k_{ij}$, are the components of the Chern torsion, given by
\[\tau_k=\sum_{i,j=1}^n T^k_{ij} \varphi_i \varphi_j = 2 \sum_{i<j} T^k_{ij} \varphi_i \varphi_j .\]
It is also well-known (cf. \cite{YZ}) that
\begin{equation} \label{eq:5}
\theta^s=\theta + 2\gamma .
\end{equation}
The {\em Gauduchon torsion $1$-form}, which is a global $(1,0)$-form on $M^n$, is defined by $\eta = \mbox{tr}(\gamma')$, or equivalently, by $\partial \omega^{n-1} = -2 \eta \wedge \omega^{n-1}$ where $\omega = \sqrt{-1}\,^t\!\varphi \wedge \overline{\varphi}$ is the K\"ahler form of $g$. The components of $\eta$ under $e$ are
$$ \eta = \sum_k \eta_k \varphi_k, \ \ \ \ \ \eta_k = \sum_{i} T^i_{ik}. $$
By the same proof of \cite[Lemma 4]{YZ}, we have the following

\begin{lemma}\label{lemma1}
Let $(M^n,g)$ be a Hermitian manifold. Let $D$ be any Hermitian connection on $M$, namely, one satisfying $Dg=0$ and $DJ=0$. Then for any given $p\in M$, there exists a unitary frame $e$ of $(1,0)$-tangent vectors in a neighborhood of $p$, such that the connection matrix $\theta^D$ for $D$ vanishes at $p$.
\end{lemma}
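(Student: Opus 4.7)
The plan is to start from an arbitrary unitary $(1,0)$-frame near $p$ and then twist it by a $U(n)$-valued gauge transformation that precisely cancels the connection matrix at $p$. The reason such a cancellation is possible is purely Lie-algebraic: the hypothesis $Dg=0$, evaluated in a unitary frame, forces the connection matrix to be skew-Hermitian, so its value at $p$ lies in $\mathfrak{u}(n)$, which is exactly the tangent Lie algebra of the gauge group $U(n)$ we are permitted to use.

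Concretely, first choose any unitary $(1,0)$-frame $\tilde e$ in a neighborhood of $p$ (e.g., Gram--Schmidt on a holomorphic frame in a chart; the condition $DJ=0$ makes this compatible with $D$ because $D$ preserves the type decomposition). Let $\tilde\theta^D$ be the connection matrix of $D$ in this frame. Because $\tilde e$ is unitary and $Dg=0$, we have $\tilde\theta^D+(\tilde\theta^D)^{*}=0$ pointwise, where ${}^{*}$ denotes conjugate transpose. Pick real local coordinates $x^1,\ldots,x^{2n}$ centered at $p$ and expand
\[
\tilde\theta^D\big|_p \;=\; \sum_{k=1}^{2n} A_k\, dx^k,
\]
with each $A_k$ a skew-Hermitian constant matrix. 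Define
\[
U(x) \;=\; \exp\!\Bigl(-\sum_{k=1}^{2n} A_k\, x^k\Bigr).
\]
Since the exponent is skew-Hermitian for every real $x$, $U$ takes values in $U(n)$; in particular $U(p)=I$, and the new frame $e := U\tilde e$ is again a unitary $(1,0)$-frame near $p$.

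Under this change of frame the connection matrix transforms by the usual rule $\theta^D = (dU)U^{-1} + U\,\tilde\theta^D\, U^{-1}$. Evaluating at $p$, one has $U(p)=I$ and, since the exponent vanishes to first order at $p$, $dU\big|_p = -\sum_k A_k\, dx^k = -\tilde\theta^D\big|_p$; this gives $\theta^D\big|_p = 0$ as desired. The one step that requires care is to confirm that the exponential $U(x)$ genuinely lies in $U(n)$ rather than only in $GL(n,\mathbb C)$, and this is the single place where the Hermitian hypothesis on $D$ is used, via the skew-Hermiticity of the $A_k$. The argument is parallel to the proof of \cite[Lemma 4]{YZ} for the Chern connection and carries over verbatim to any Hermitian connection $D$.
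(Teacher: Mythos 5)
Your proof is correct and follows essentially the same route as the paper, which simply invokes the proof of \cite[Lemma 4]{YZ}: in a unitary frame the hypothesis $Dg=0$ makes $\theta^D$ skew-Hermitian ($\mathfrak{u}(n)$-valued), so a $U(n)$-valued gauge transformation with $U(p)=I$ and $dU|_p=-\tilde\theta^D|_p$ kills the connection matrix at $p$, with $DJ=0$ guaranteeing that $D$ preserves $T^{1,0}$ so that $\theta^D$ is well defined on a $(1,0)$-frame. No gaps; the exponential construction and the transformation rule $\theta^D=(dU)U^{-1}+U\tilde\theta^D U^{-1}$ are handled correctly.
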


We often will use the above lemma for $D=\nabla^c$ or $\nabla^s$ in our later proofs, so at $p$ first order covariant derivatives with respect to $\nabla^c$ (or $\nabla^s$) simply become first order partial (directional) derivatives.

Next let us fix the curvature notation. We have
$$ \Theta_{ij} = \sum_{k,\ell =1}^n R^c_{k\overline{\ell} i \overline{j}} \varphi_k \wedge \overline{\varphi}_{\ell}, \ \ \ \ \Theta^s_{ij} = \sum_{k,\ell =1}^n \big( R^s_{k\ell i \overline{j}} \,\varphi_k \wedge \varphi_{\ell}  + R^s_{\overline{k}\overline{\ell} i \overline{j}} \,\overline{\varphi}_k \wedge \overline{\varphi}_{\ell} + R^s_{k\overline{\ell} i \overline{j}} \,\varphi_k \wedge \overline{\varphi}_{\ell} \big) , $$
and
\begin{eqnarray*}
 && (\Theta_1)_{ij} = \sum_{k,\ell =1}^n \big( R_{k\ell i \overline{j}} \,\varphi_k \wedge \varphi_{\ell}  + R_{\overline{k}\overline{\ell} i \overline{j}} \,\overline{\varphi}_k \wedge \overline{\varphi}_{\ell} + R_{k\overline{\ell} i \overline{j}} \,\varphi_k \wedge \overline{\varphi}_{\ell} \big) , \\
 && (\Theta_2)_{ij} = \sum_{k,\ell =1}^n \big( R_{k\ell \overline{i} \overline{j}} \,\varphi_k \wedge \varphi_{\ell}  + R_{k\overline{\ell}\overline{i} \overline{j}} \,\varphi_k \wedge \overline{\varphi}_{\ell} \big).
 \end{eqnarray*}
Note that by Gray's theorem, one always has $R_{ijk\ell}=0$. Given any $p\in M$, let $e$ be local unitary frame near $p$ so that $\theta|_p=0$. Then by the structure equations, at $p$ we have $\overline{\partial}\varphi|_p=0$ and
\begin{eqnarray*}
(\Theta_1)^{1,1} - \Theta & = & (d\gamma - \gamma \gamma - \overline{\theta}_2 \theta_2)^{1,1} \ = \ \overline{\partial} \gamma' -  \partial {\gamma'}^{\ast} + \gamma' \gamma'^{\ast} +  \gamma'^{\ast}\gamma' - \overline{\theta}_2 \theta_2, \\
(\Theta^s)^{1,1} - \Theta & = & (2d\gamma - 4\gamma \gamma )^{1,1} \ = \ 2\overline{\partial} \gamma' - 2 \partial {\gamma'}^{\ast} + 4\gamma' \gamma'^{\ast} + 4 \gamma'^{\ast}\gamma'.
\end{eqnarray*}
From this we get the relationship between $R_{k\overline{\ell}i\overline{j}}=(\Theta_1)_{ij}(e_k, \overline{e}_{\ell})$ or  $R^s_{k\overline{\ell}i\overline{j}}=\Theta^s_{ij}(e_k, \overline{e}_{\ell})$ with $R^c_{k\overline{\ell}i\overline{j}}=\Theta_{ij}(e_k, \overline{e}_{\ell})$:

\begin{lemma}\label{lemma2}
Let $(M^n,g)$ be a Hermitian manifold. Under any local unitary frame $e$, it holds
\begin{eqnarray}
R_{k\overline{\ell}i\overline{j}} - R^c_{k\overline{\ell}i\overline{j}} &  = & -T_{ik,\overline{\ell}}^j - \overline{T^i_{j\ell , \overline{k}}}   + \sum_r \big(  T^{r}_{ik}\overline{T^r_{j\ell}} - T^{j}_{kr}\overline{T^i_{\ell r}} - T^{\ell}_{ir}\overline{T^k_{j r}}  \big) \label{RRc} \\
R^s_{k\overline{\ell}i\overline{j}} - R^c_{k\overline{\ell}i\overline{j}} &  = &   -2T_{ik,\overline{\ell}}^j - 2 \overline{T^i_{j\ell , \overline{k}}}   + 4\sum_r \big(   T^{r}_{ik}\overline{T^r_{j\ell}} - T^{j}_{kr}\overline{T^i_{\ell r}}  \big)  \label{RsRc}
\end{eqnarray}
for any $i$, $j$, $k$, $\ell$, where the indices after comma mean covariant derivatives with respect to $\nabla^c$.
\end{lemma}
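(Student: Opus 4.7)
The plan is to derive both identities directly from the two formulas for $(\Theta_1)^{1,1}-\Theta$ and $(\Theta^s)^{1,1}-\Theta$ displayed just above the lemma. First I would invoke Lemma \ref{lemma1} with $D=\nabla^c$ to choose, around an arbitrary point $p$, a local unitary frame $e$ in which the Chern connection matrix satisfies $\theta|_p=0$. In such a frame the Chern structure equation \eqref{structure-c} forces $\overline{\partial}\varphi|_p=0$ (and $\partial\overline{\varphi}|_p=0$), and any first-order covariant derivative with respect to $\nabla^c$ coincides at $p$ with the ordinary directional derivative of the coefficient function along $e_k$ or $\overline{e}_\ell$. Since both sides of \eqref{RRc} and \eqref{RsRc} are tensorial, it suffices to verify them at $p$ in such a frame.

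Next I would substitute the explicit formulas \eqref{eq:4} for $\gamma_{ij}$ and $(\theta_2)_{ij}$. With $(\gamma')_{ij}=\sum_k T^{j}_{ik}\varphi_k$, the skew-Hermiticity of $\gamma$ in a unitary frame gives $(\gamma'')_{ij}=-(\gamma'^{\ast})_{ij}=-\sum_k \overline{T^{i}_{jk}}\,\overline{\varphi}_k$. The five quantities on the right-hand sides then unfold term by term at $p$: the derivative pieces yield $(\overline{\partial}\gamma')_{ij}(e_k,\overline{e}_\ell)=-T^{j}_{ik,\overline{\ell}}$ and $(\partial\gamma'^{\ast})_{ij}(e_k,\overline{e}_\ell)=\overline{T^{i}_{j\ell,\overline{k}}}$, because the pieces involving $\overline{\partial}\varphi_k$ and $\partial\overline{\varphi}_k$ drop out at $p$. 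The quadratic pieces give $(\gamma'\gamma'^{\ast})_{ij}(e_k,\overline{e}_\ell)=\sum_r T^{r}_{ik}\overline{T^{r}_{j\ell}}$, and after using the Chern-torsion antisymmetry $T^{k}_{ij}=-T^{k}_{ji}$, also $(\gamma'^{\ast}\gamma')_{ij}(e_k,\overline{e}_\ell)=-\sum_r T^{j}_{kr}\overline{T^{i}_{\ell r}}$ and $(\overline{\theta}_2\theta_2)_{ij}(e_k,\overline{e}_\ell)=\sum_r T^{\ell}_{ir}\overline{T^{k}_{jr}}$.

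Assembling these with the prefactors and signs read off from the two formulas immediately yields \eqref{RRc} for $R-R^c$ and \eqref{RsRc} for $R^s-R^c$. Note in particular that the $\overline{\theta}_2\theta_2$ contribution is absent in the Strominger case, because $(\Theta^s)^{1,1}-\Theta$ is built solely from $d\gamma$ and $\gamma\gamma$; this explains why only two quadratic torsion terms (rather than three) appear in \eqref{RsRc}, each with coefficient $4$ instead of $1$, matching the $2$/$4$ prefactors of the displayed identity for the Strominger case.

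The only real obstacle is sign and index bookkeeping: correctly identifying the $(1,1)$-parts $d\gamma=\overline{\partial}\gamma'+\partial\gamma''$ and $(\gamma\gamma)^{1,1}=\gamma'\gamma''+\gamma''\gamma'$ under the identification $\gamma''=-\gamma'^{\ast}$, orienting $\varphi_k\wedge\overline{\varphi}_\ell$ versus $\overline{\varphi}_m\wedge\varphi_k$ when evaluating on $(e_k,\overline{e}_\ell)$, and applying $T^{k}_{ij}=-T^{k}_{ji}$ at the right moments to bring each quadratic term into the precise index layout appearing on the right-hand sides. No conceptual difficulty beyond this arises.
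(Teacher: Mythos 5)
Your proposal is correct and follows essentially the same route as the paper: the paper derives Lemma \ref{lemma2} precisely by evaluating the displayed identities for $(\Theta_1)^{1,1}-\Theta$ and $(\Theta^s)^{1,1}-\Theta$ in a unitary frame with $\theta|_p=0$, substituting the expressions (\ref{eq:4}) for $\gamma$ and $\theta_2$ and using $T^k_{ij}=-T^k_{ji}$, exactly as you do. Your term-by-term evaluations (including the signs from pairing $\overline{\varphi}\wedge\varphi$ with $(e_k,\overline{e}_\ell)$ and the absence of the $\overline{\theta}_2\theta_2$ term in the Strominger case) all check out.
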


Next let us recall the formula of Weyl curvature tensor for a Hermitian surface $(M^2,g)$. One has $W=W^+ + W^-$, and $g$ is said to be {\em self-dual} if $W^-=0$. Let $\{ e_1, e_2\}$ be a local unitary frame of $(M^2,g)$. Then
$$ \{ e_1\wedge \overline{e}_2, e_2\wedge \overline{e}_1, \frac{1}{\sqrt{2}}(e_1\wedge \overline{e}_1 - e_2\wedge \overline{e}_2) \} $$
form a basis of the complexification of $\Lambda^2_-$, the space on which the Hodge star operator is minus identity. Let us call this basis the standard basis associated with the unitary frame $e$. In \cite{ADM}, it was proved that

\begin{lemma}[\cite{ADM}]  \label{lemma3}
Let $(M^2,g)$ be a Hermitian surface, and $e$ a local unitary frame. Then under the standard basis associated with $e$, the components of $W^-$, the anti-self dual part of the Weyl tensor, are given by
\begin{equation} \label{Wminus}
\left\{ \begin{array}{lll} W^-_1=R_{1\overline{2}1\overline{2}}, \\ W^-_2 = \frac{1}{\sqrt{2}} \big( R_{1\overline{2}2\overline{2}} - R_{1\overline{2}1\overline{1}} \big), \\ W^-_3 = \frac{1}{6} \big( 2R_{1\overline{2}2\overline{1}}    + 2 R_{1\overline{1}2\overline{2}} - R_{1\overline{1}1\overline{1}} - R_{2\overline{2}2\overline{2}} \big) , \end{array} \right.
\end{equation}
\end{lemma}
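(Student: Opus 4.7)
The plan is to compute $W^-$ directly from its definition as the anti-self-dual part of the Weyl tensor, using the standard four-dimensional formula
\[
W_{abcd}=R_{abcd}-\tfrac12\bigl(g_{ac}\mathrm{Ric}_{bd}-g_{ad}\mathrm{Ric}_{bc}+g_{bd}\mathrm{Ric}_{ac}-g_{bc}\mathrm{Ric}_{ad}\bigr)+\tfrac{s}{6}(g_{ac}g_{bd}-g_{ad}g_{bc}),
\]
evaluated on pairs of the three basis vectors $\alpha_1=e_1\wedge\overline e_2$, $\alpha_2=e_2\wedge\overline e_1$, $\alpha_3=\tfrac{1}{\sqrt 2}(e_1\wedge\overline e_1-e_2\wedge\overline e_2)$ of $\Lambda^2_-\otimes\mathbb C$, which is identified with the space of primitive $(1,1)$-forms. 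The decisive simplification comes from working in the unitary frame $e$: one has $g_{i\overline j}=\delta_{ij}$ and $g_{ij}=g_{\overline i\overline j}=0$, so for each index pattern arising from these basis vectors most of the Ricci and scalar correction terms vanish index by index.

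For $W^-_1=W(\alpha_1,\alpha_1)$, the indices are $(1,\overline 2,1,\overline 2)$; every metric factor in the Weyl correction then involves only unbarred or only barred indices and hence is zero, leaving $W^-_1=R_{1\overline 2 1\overline 2}$. For the off-diagonal entry $W^-_2\propto\langle W\alpha_1,\alpha_3\rangle$, one computes $W_{1\overline 2 1\overline 1}$ and $W_{1\overline 2 2\overline 2}$; each inherits exactly one nonzero Ricci correction (from $g_{1\overline 1}=1$ and $g_{2\overline 2}=1$ respectively), and both equal $+\tfrac12\mathrm{Ric}_{1\overline 2}$, so they cancel in the difference dictated by $\alpha_3$, and the $\tfrac{1}{\sqrt 2}$ normalization yields $W^-_2=\tfrac{1}{\sqrt 2}(R_{1\overline 2 2\overline 2}-R_{1\overline 2 1\overline 1})$.

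The diagonal entry $W^-_3=W(\alpha_3,\alpha_3)$ is the most delicate. It expands into the four Weyl components $W_{i\overline i j\overline j}$ with $i,j\in\{1,2\}$; now several Ricci and the scalar corrections survive. Collecting them and using the Hermitian surface identity $s=2(\mathrm{Ric}_{1\overline 1}+\mathrm{Ric}_{2\overline 2})$, all Ricci and scalar contributions cancel, and one reads off the stated combination of Riemann components, with the overall prefactor $\tfrac16$ coming from the scalar term of the Weyl formula combined with the $\tfrac12$ from squaring $\tfrac{1}{\sqrt 2}$.

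The main obstacle is entirely combinatorial: tracking which metric factors survive in each index pattern and how the numerical prefactors $\tfrac12$, $\tfrac16$, $\tfrac{1}{\sqrt 2}$ interact, without dropping sign conventions for the curvature. A useful independent check is that $W^-$ is traceless as an endomorphism of $\Lambda^2_-$, so the three diagonal entries in a real orthonormal basis must sum to zero; this pins down the coefficient $\tfrac16$ in $W^-_3$ and confirms the Ricci/scalar cancellation of the previous step.
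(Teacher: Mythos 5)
Your first two computations are fine (and indeed the $(1,\overline{2},1,\overline{2})$ and $(1,\overline{2},i,\overline{i})$ index patterns kill, respectively all, and all but a cancelling pair of, the Ricci/scalar corrections), but the argument for $W^-_3$ has a genuine gap. The stated formula contains the term $2R_{1\overline{2}2\overline{1}}$, and for a non-K\"ahler Hermitian metric $R_{1\overline{2}2\overline{1}}$ is \emph{not} expressible through the components $R_{i\overline{i}j\overline{j}}$ that make up the pure Riemann part of your expansion of $W(\alpha_3,\alpha_3)$. So if, as you claim, ``all Ricci and scalar contributions cancel,'' you would be left with a combination of $R_{1\overline{1}1\overline{1}},R_{2\overline{2}2\overline{2}},R_{1\overline{1}2\overline{2}}$ only, which is not the lemma. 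What actually happens is the opposite: the Ricci and scalar corrections do not cancel but combine into a multiple of the scalar curvature $\sigma$, and the whole content of the formula is the unitary-frame identity $\sigma = 2\big( R_{1\overline{1}1\overline{1}} + R_{2\overline{2}2\overline{2}} - 2 R_{1\overline{1}2\overline{2}} + 4R_{1\overline{2}2\overline{1}} \big)$, which is precisely how $R_{1\overline{2}2\overline{1}}$ enters; this identity (or the analogous expansion of the Ricci contractions in the unitary frame) never appears in your argument, and the trace relation $s=2(\mathrm{Ric}_{1\overline{1}}+\mathrm{Ric}_{2\overline{2}})$ alone cannot supply it. Your own proposed sanity check exposes this: tracelessness of $W^-$ gives $W(\alpha_3,\alpha_3)=-2\,W(\alpha_1,\alpha_2)=-2\,W_{1\overline{2}2\overline{1}}$, so the $R_{1\overline{2}2\overline{1}}$ term is forced to appear through the correction terms, not despite them. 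There is also a normalization point: the quantity in the lemma is (up to sign) $W_{1\overline{2}2\overline{1}}=R_{1\overline{2}2\overline{1}}-\sigma/12$, i.e.\ $-\tfrac12 W(\alpha_3,\alpha_3)$, so identifying $W^-_3$ literally with the diagonal entry $W(\alpha_3,\alpha_3)$ would be off by a factor $-2$ even after correct bookkeeping.

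For comparison, the paper does not reprove this lemma: it quotes from \cite{ADM} the components $W^-_1$, $W^-_2$ and the formula $W^-_3=R_{1\overline{2}2\overline{1}}-\sigma/12$, and then only performs the small algebraic step of substituting the unitary-frame expression for $\sigma$ displayed above. Your plan of deriving everything from the four-dimensional Weyl decomposition is legitimate and more self-contained, but to be correct it must (a) compute the entry $W_{1\overline{2}2\overline{1}}$ (or relate $W(\alpha_3,\alpha_3)$ to it by tracelessness, with the factor $-2$), keeping the surviving Ricci/scalar terms, and (b) invoke the unitary-frame formulas for $\mathrm{Ric}$ and $\sigma$ in terms of the four-index Riemann components, remembering that $R_{1\overline{1}2\overline{2}}\neq R_{1\overline{2}2\overline{1}}$ in the non-K\"ahler case. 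As written, the key step is asserted to be a cancellation that cannot occur.
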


The last formula in the lemma is a combination of the formula $W^-_3 =  R_{1\overline{2}2\overline{1}} - \frac{\sigma }{12}$
given in \cite{ADM} and the formula
$$ \sigma = 2\big( R_{1\overline{1}1\overline{1}} + R_{2\overline{2}2\overline{2}} - 2 R_{1\overline{1}2\overline{2}} + 4R_{1\overline{2}2\overline{1}} \big) $$
under the unitary frame $e$. Here $\sigma$ is the scalar curvature of $g$ (denoted as $\tau$ in \cite{ADM}).

\vspace{0.2cm}

In the remaining part of this section, let us compute the Strominger curvature for isosceles Hopf manifolds. We begin with a simple computation, which will also be used in the next section. Let $\Omega \subseteq {\mathbb C}^n$ be a domain, namely, a connected open subset. Denote by $z=(z_1, \ldots , z_n)$ the Euclidean coordinate and by $g_0$ the Euclidean metric. Suppose $\xi \in C^{\infty}(\Omega)$ is a smooth positive function, and consider the Hermitian metric $g=\frac{1}{\xi}g_{0}$ on $\Omega$.

Write $h = \sqrt{\xi}$, $e_i=h\frac{\partial}{\partial z_i}$, $\varphi_i=\frac{1}{h}dz_i$. Then $e$ becomes a unitary frame of $(\Omega , g)$, with $\varphi$ the dual coframe. Under this frame, we have
$$ \theta_{ij}= \frac{1}{h} (\overline{\partial} - \partial )h \,\delta_{ij} , \ \ \ \ \ T^j_{ik} = \delta_{ij}h_k - \delta_{kj}h_i, $$
where $h_i=\frac{\partial h}{\partial z_i}$. From this, one gets the expression of $\gamma$ and
$$ \theta^s_{ij} = \frac{1}{h}(\partial h - \overline{\partial}h) \,\delta_{ij}+ \frac{2}{h} \big( h_{\overline{j}} d\overline{z}_i - h_i dz_j \big) . $$
Taking the $(1,1)$-part of $\Theta^s = d\theta^s - \theta^s \wedge \theta^s$, a straight forward computation leads to the following
\begin{eqnarray}
R^s_{k\overline{\ell} i \overline{j}} & = & - 4 \sum_r |h_r|^2\delta_{i\ell} \delta_{kj} -2\delta_{ij} \big( h h_{k\overline{\ell}} - h_k h_{\overline{\ell}} \big) + 4 \delta_{k\ell} h_i h_{\overline{j}} + \nonumber \\
&& + \ 2\delta_{i\ell} \big( h h_{k\overline{j}} - h_k h_{\overline{j}} \big) + 2\delta_{kj} \big( hh_{i\overline{\ell}} - h_i h_{\overline{\ell}} \big) \label{Rs}
\end{eqnarray}
for any $1\leq i,j,k,\ell \leq n$, where $h_{i\overline{j}}$ stands for $\frac{\partial^2h}{\partial z_i \partial \overline{z}_j}$.

For tensor $P_{i\overline{j}k\overline{\ell}}$, let us denote by $\widehat{P}_{i\overline{j}k\overline{\ell}} $ its {\em symmetrization,} namely,
\begin{equation}
 \widehat{P}_{i\overline{j}k\overline{\ell}}  : = \frac{1}{4} \big( P_{i\overline{j}k\overline{\ell}} + P_{k\overline{j}i\overline{\ell}} + P_{i\overline{\ell}k\overline{j}} + P_{k\overline{\ell}i\overline{j}}\big) .
 \end{equation}
Taking the symmetrization of  (\ref{Rs}) and using the fact that $hh_{i\overline{j}}+ h_ih_{\overline{j}}=\frac{1}{2}\xi_{i\overline{j}}$,  we get the expression of the symmetrization of Strominger curvature tensor of the metric $g=\frac{1}{\xi}g_0$ under the unitary frame:
\begin{equation} \label{Rshat}
\widehat{R}^s_{i\overline{j}k\overline{\ell}} = -\frac{1}{2\xi}\sum_r|\xi_r|^2 (\delta_{i\ell} \delta_{kj} + \delta_{ij}  \delta_{k\ell} ) + \frac{1}{4} \{ \delta_{ij} \xi_{k\overline{\ell}} + \delta_{k\ell }\xi_{i\overline{j}} +\delta_{i\ell }\xi_{k\overline{j}} + \delta_{kj }\xi_{i\overline{\ell}}    \} .
\end{equation}

 Note that the Strominger holomorphic sectional curvature is pointwise constant, namely $H^s=f$ for some real-valued function $f$, if and only if  $\widehat{R}^s_{i\overline{j}k\overline{\ell}} = \frac{c}{2} \big( \delta_{ij} \delta_{k\ell} + \delta_{i\ell} \delta_{kj} \big)$ under any unitary frame. So we have the following observation:

\begin{lemma} \label{lemma4}
Let $\Omega \subseteq {\mathbb C}^n$ be an open connected subset, $\xi$ a smooth positive function on $\Omega$, and $g=\frac{1}{\xi}g_{0}$ a Hermitian metric conformal to the Euclidean metric $g_{0}$. Then $(\Omega , g)$ has pointwise constant Strominger holomorphic sectional curvature $H^s=f$ if and only if $\xi$ satisfies
\begin{equation}
\xi_{i\overline{j}} = \lambda \delta_{ij}, \ \ \ \lambda = f+ \frac{1}{\xi} \sum_r|\xi_r|^2 = \mbox{constant}
\end{equation}
for any $1\leq i,j\leq n$. Here $\xi_r = \frac{\partial \xi}{\partial z_r}$ and $\xi_{i\overline{j}}=\frac{\partial^2 \xi }{\partial z_i \partial \overline{z}_j}$.
\end{lemma}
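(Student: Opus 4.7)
The plan is to translate the hypothesis $H^s=f$ into a linear condition on the complex Hessian of $\xi$ using the symmetrization formula (\ref{Rshat}) established just before the lemma. Recall from the observation made there that pointwise constancy $H^s=f$ is equivalent to $\widehat{R}^s_{i\overline{j}k\overline{\ell}}=\tfrac{f}{2}(\delta_{ij}\delta_{k\ell}+\delta_{i\ell}\delta_{kj})$ in any local unitary frame. Substituting (\ref{Rshat}) and moving the $|\xi_r|^2$ terms to the right produces the pointwise identity
\begin{equation*}
\tfrac{1}{4}\bigl(\delta_{ij}\xi_{k\overline{\ell}}+\delta_{k\ell}\xi_{i\overline{j}}+\delta_{i\ell}\xi_{k\overline{j}}+\delta_{kj}\xi_{i\overline{\ell}}\bigr) \;=\; \tfrac{\lambda}{2}\bigl(\delta_{ij}\delta_{k\ell}+\delta_{i\ell}\delta_{kj}\bigr),
\end{equation*}
where $\lambda:=f+\tfrac{1}{\xi}\sum_r|\xi_r|^2$ is precisely the scalar that appears in the statement. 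Everything from here on is specialization of the four free indices in this identity.

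The next step is to choose indices that isolate the off-diagonal and diagonal parts of the complex Hessian separately. Taking $k=\ell=i$ with $j\neq i$ kills the right-hand side (both Kronecker symbols vanish) while the left collapses to $\tfrac{1}{2}\xi_{i\overline{j}}$, giving $\xi_{i\overline{j}}=0$ for every $i\neq j$. Next, taking $i=j=k=\ell$ turns the identity into $\xi_{i\overline{i}}=\lambda$ for every $i$, so all diagonal entries of the Hessian coincide with the common scalar $\lambda$. These two conclusions combine to the pointwise equation $\xi_{i\overline{j}}=\lambda\,\delta_{ij}$ on $\Omega$.

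The delicate step, which I expect to be the main obstacle, is upgrading this pointwise equation to the global statement that $\lambda$ is an actual \emph{constant}. The plan is a Schur-type commutation argument: differentiating $\xi_{i\overline{j}}=\lambda\,\delta_{ij}$ in $\overline{z}_k$ gives $\xi_{i\overline{j}\overline{k}}=\lambda_{\overline{k}}\,\delta_{ij}$, and interchanging the roles of $\overline{j}$ and $\overline{k}$ yields $\xi_{i\overline{k}\overline{j}}=\lambda_{\overline{j}}\,\delta_{ik}$. Since ordinary partials of the smooth function $\xi$ commute, the left-hand sides agree, so $\lambda_{\overline{k}}\,\delta_{ij}=\lambda_{\overline{j}}\,\delta_{ik}$ for all $i,j,k$. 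Setting $j=i$ and then choosing any $i\neq k$, which is available precisely because $n\geq 2$, forces $\lambda_{\overline{k}}=0$ for every $k$. Hence $\lambda$ is holomorphic; being also real-valued (as $\xi$ is real, $\lambda=\xi_{i\overline{i}}$ is real), $\lambda$ is locally constant, and therefore globally constant on the connected domain $\Omega$.

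The converse is an immediate verification: if $\xi_{i\overline{j}}=\lambda\,\delta_{ij}$ with $\lambda$ constant, then plugging back into (\ref{Rshat}) collapses the symmetrized Strominger curvature to $\tfrac{f}{2}(\delta_{ij}\delta_{k\ell}+\delta_{i\ell}\delta_{kj})$ with $f=\lambda-\tfrac{1}{\xi}\sum_r|\xi_r|^2$, so $H^s=f$. The only non-routine point in the whole argument is the Schur-type step promoting the pointwise scalar to a global constant, and it is exactly there that the assumption $n\geq 2$ and the realness of $\xi$ enter.
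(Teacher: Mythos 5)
Your proof is correct and follows the same route the paper takes: it reads the condition $H^s=f$ off the symmetrized curvature formula (\ref{Rshat}), which is exactly how the paper arrives at the lemma (stated there as an immediate observation without written details). Your specializations of indices and the Schur-type commutation step $\xi_{i\overline{j}\overline{k}}=\xi_{i\overline{k}\overline{j}}$ correctly supply the constancy of $\lambda$ that the paper leaves implicit, tacitly using $n\geq 2$, which is consistent with every application of the lemma in the paper.
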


Now let us apply the above observation to the special case of {\em isosceles Hopf manifolds.} Let $\Omega = {\mathbb C}^n\setminus \{0\}$, $n\geq 2$. Write $|z|^2=|z_1|^2+\cdots +|z_n|^2$. For $\xi= |z|^2$, we compute that
$$ \xi_i = \overline{z}_i, \ \ \ \xi_{i\overline{j}}=\delta_{ij}. $$
So $\xi$ satisfies the equation in Lemma \ref{lemma4} with $\lambda =1$, which leads to $f\equiv 0$. So we know that the Hermitian metric $g_h$ with K\"ahler form
$ \omega_{g_h} = \sqrt{-1} \, \frac{\partial \overline{\partial} |z|^2} {|z|^2}$
has Strominger holomorphic sectional curvature $H^s\equiv 0$.

Recall that an {\em isosceles Hopf manifold} is $M^n_{\phi}$   is the quotient of ${\mathbb C}^n\setminus \{0\}$ by the infinite cyclic group generated by
$$\phi : (z_1, z_2, \cdots , z_n) \mapsto (a_1z_1, a_2z_2, \ldots , a_nz_n) $$
where $0<|a_1|=\cdots =|a_n|<1$. The metric $g_h$ (called the {\em standard Hopf metric}) with K\"ahler form  $\sqrt{-1} \, \frac{\partial \overline{\partial} |z|^2} {|z|^2}$ descends down to $M^n_{\phi}$ and by Lemma \ref{lemma4} we have

\begin{lemma} \label{lemma5}
For each $n\geq 2$, any isosceles Hopf manifold $(M^n,g_h)$ has vanishing Strominger holomorphic sectional curvature: $H^s=0$. It is Strominger flat (namely  $R^s=0$) if and only if $n=2$.
\end{lemma}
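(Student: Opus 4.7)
My plan is to verify both assertions of the lemma by substituting $h = |z|$ into formula (\ref{Rs}) and analyzing the resulting expression. The first claim $H^s \equiv 0$ falls out for free: for $\xi = |z|^2$ one has $\xi_i = \bar z_i$, $\xi_{i\bar j} = \delta_{ij}$, and $\sum_r |\xi_r|^2 = |z|^2 = \xi$, so Lemma \ref{lemma4} yields $\lambda = 1$ and hence $f = \lambda - \tfrac{1}{\xi}\sum_r |\xi_r|^2 = 0$. Since $g_h$ is $\phi$-invariant, this descends to $M^n_\phi$, proving the first assertion for all $n \geq 2$.

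For the Strominger flatness statement, I would plug $h = |z|$ directly into (\ref{Rs}). The ingredients $h_i = \bar z_i/(2|z|)$, $\sum_r |h_r|^2 = \tfrac14$, $h_i h_{\bar j} = \bar z_i z_j/(4|z|^2)$, and $h h_{i\bar j} - h_i h_{\bar j} = \tfrac{1}{2}\delta_{ij} - \bar z_i z_j/(2|z|^2)$ collapse (\ref{Rs}) to
\begin{equation*}
R^s_{k\bar\ell i\bar j} \;=\; \delta_{i\ell}\delta_{kj} - \delta_{ij}\delta_{k\ell} + \frac{1}{|z|^2}\bigl(\delta_{ij}\bar z_k z_\ell + \delta_{k\ell}\bar z_i z_j - \delta_{i\ell}\bar z_k z_j - \delta_{kj}\bar z_i z_\ell \bigr)
\end{equation*}
in the unitary frame $e_i = |z|\,\partial/\partial z_i$. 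The crux is then the purely algebraic identity
\begin{equation*}
\delta_{ij}\bar z_k z_\ell + \delta_{k\ell}\bar z_i z_j - \delta_{i\ell}\bar z_k z_j - \delta_{kj}\bar z_i z_\ell \;=\; |z|^2\bigl(\delta_{ij}\delta_{k\ell} - \delta_{i\ell}\delta_{kj}\bigr),
\end{equation*}
which I claim holds for all $i, j, k, \ell \in \{1, 2\}$ but fails in higher dimensions. A short case check suffices for $n = 2$: both sides are forced to zero except in the four index patterns $(1,1,2,2), (2,2,1,1), (1,2,2,1), (2,1,1,2)$, where they both evaluate to $\pm|z|^2$. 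Substituting this identity into the formula above gives $R^s \equiv 0$ when $n = 2$.

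For $n \geq 3$ the identity fails, and I would close the argument by exhibiting a non-vanishing component at the point $z = (1, 0, \ldots, 0)$. Choosing $k = \ell = 2$ and $i = j = 3$, every $\bar z_p z_q$-type term vanishes while the first two terms give $R^s_{2\bar 2 3\bar 3} = 0 - 1 = -1 \neq 0$. The step requiring the most care is the initial algebraic derivation of the explicit formula for $R^s_{k\bar\ell i\bar j}$ from (\ref{Rs}); after that, neither direction presents a genuine obstacle, and the fact that $g_h$ is invariant under the transitive $U(n)\times \mathbb{R}_+$-action on $\mathbb C^n\setminus\{0\}$ ensures that the pointwise conclusion propagates to all of $M^n_\phi$.
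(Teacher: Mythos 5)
Your proposal is correct and follows essentially the same route as the paper: $H^s\equiv 0$ comes from Lemma \ref{lemma4} with $\xi=|z|^2$, and the flatness/non-flatness dichotomy is obtained by substituting $h=|z|$ into formula (\ref{Rs}), which you simply carry out more explicitly than the paper's ``straightforward computation.'' The only cosmetic difference is the witness for non-flatness when $n\geq 3$ (you use $R^s_{2\overline{2}3\overline{3}}=-1$ at $z=(1,0,\ldots,0)$, the paper uses $R^s_{1\overline{1}2\overline{3}}=\overline{z}_2z_3/|z|^2$), which is immaterial.
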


\begin{proof} We just need to check the last sentence.   When $n\geq 3$, apply the formula (\ref{Rs}) for $h=|z|$, we get $$R^s_{1\overline{1}2\overline{3}}=-2(hh_{2\overline{3}} -h_2h_{\overline{3}}) = 4h_2h_{\overline{3}}= \frac{\overline{z}_2z_3}{|z|^2} \neq 0.$$
So the Hopf manifold is not Strominger flat when $n\geq 3$. For $n=2$, by a straight forward computation we conclude that $R^s=0$, so the standard Hopf metric on any isosceles Hopf surface is Strominger flat.
\end{proof}

The above example $(M^n_{\phi},g_h)$ with $n\geq 3$ illustrates that there are Strominger space forms (with $H^s=0$) that are not Strominger flat.

Next, let $M=M^n_{\phi}$ and $\pi: {\mathbb C}^n \setminus \{ 0\} \rightarrow M$ be the projection map. Suppose $\tilde{g}$ is a Hermitian metric on $M$ conformal to $g_h$ so that $\tilde{g}$ has pointwise constant Strominger holomorphic sectional curvature. Write $\tilde{g} =\frac{1}{F}g_h$ for some positive function $F\in C^{\infty }(M)$, and let $\xi =\pi^{\ast}F \cdot |z|^2$. Then the metric  $\pi^{\ast}\tilde{g}=\frac{1}{\xi}g_{0}$ has $\tilde{H}^s=f$ for some real-valued smooth function $f$. Both $F$ and $f$ are invariant under $\phi$. By Lemma \ref{lemma4}, $\xi$ satisfies the condition $\xi_{i\overline{j}}=\lambda \delta_{ij}$ where $\lambda = f+ \frac{1}{\xi}|\xi_r|^2$ is a constant. This implies that $\xi-\lambda |z|^2$ is pluriclosed, hence is (twice of) the real part of a holomorphic function $\psi$ on ${\mathbb C}^n \setminus \{ 0\}$:
$$ \xi = \lambda |z|^2 + \psi + \overline{\psi}. $$
Let us write $a_i=a\rho_i$ for each $i$, where $a\in (0,1)$ and $|\rho_i|=1$. Then $\phi^{\ast} |z|^2 = a^2 |z|^2$,  $\phi^{\ast} \xi = a^2 \xi$, hence $\phi^{\ast}(\psi + \overline{\psi}) = a^2(\psi + \overline{\psi})$. Thus $\phi^{\ast}\psi - a^2\psi$ is a (pure imaginary) constant, so by adding a suitable constant to $\psi$ if necessary, we may assume that $\phi^{\ast}\psi = a^2\psi$. Expanding $\psi$ into Laurent series, we see that the condition implies that $\psi$ is a homogenous quadratic polynomial in $z$, namely, $\psi =  \,^t\!z A z$, where $A$ is a symmetric $n\times n$ matrix and we view $z$ as a column vector.

Since $\xi = \lambda |z|^2+ \,^t\!z A z +  \overline{\,^t\!z A z}$ is positive, we must have $\lambda >0$. For simplicity, let us replace $\tilde{g}$ by $\frac{1}{\lambda}\tilde{g}$, then $F$ and $\xi$ are scaled by the factor $\frac{1}{\lambda}$, so we may assume from now on that $\lambda =1$. We now have
\begin{equation} \label{eq:xi}
\xi = |z|^2+ \,^t\!z A z +  \overline{\,^t\!z A z}
\end{equation}
where $A$ is a symmetric $n\times n$ matrix. By Lemma \ref{lemma4}, we also have $1=f+ \frac{1}{\xi}\sum_r |\xi_r|^2$, which is equivalent to
\begin{equation} \label{eq:f}
 f= - \frac{1}{\xi}\big( 4\,\,^t\!z A\,\overline{Az} + \,^t\!z A z +  \overline{\,^t\!z A z} \big) .
\end{equation}
The condition $\phi^{\ast}\psi = a^2\psi$ means that $A$ must satisfy
\begin{equation} \label{eq:A1}
A_{ij}\rho_i\rho_j = A_{ij}, \ \ \ \forall \ i,j, \ \ \ \ \mbox{or equivalently,} \ \ \ DAD=A
\end{equation}
where $D=\mbox{diag}\{ \rho_1, \ldots , \rho_n\}$. This will guarantee that $DA\,\overline{AD}=A\overline{A}$, hence $\phi^{\ast}f=f$. For any given symmetric matrix $A$, by Chern's lemma there always exists a unitary matrix $U$ such that $\,^t\!UAU=\mbox{diag}\{ b_1, \ldots , b_n\}$ where $b_1\geq b_2 \geq \cdots \geq b_n\geq 0$. Note that $\{ b_1^2, \ldots , b_n^2\}$ are exactly the eigenvalues of the Hermitian matrix $A\overline{A}$.

For a constant $b\geq 0$, since $ |t|^2+ b(t^2+\overline{t^2})>0$ for any $t\in {\mathbb C}\setminus \{0\}$ if and only if $b< \frac{1}{2}$, we know that the condition $\xi>0$ also means that
\begin{equation} \label{eq:A2}
A \overline{A} < \frac{1}{4} I  .
\end{equation}
In summary, we have the following:

\begin{proposition} \label{prop1}
Let $M^n_{\phi}$ be the quotient of ${\mathbb C}^n\setminus \{ 0 \}$ by the infinite cyclic group generated by
$$ \phi (z_1, \ldots , z_n) = a(\rho_1z_1, \ldots , \rho_nz_n), $$
where $0<a<1$ and each $|\rho_i|=1$. Then for any  symmetric matrix $A$ satisfying (\ref{eq:A1}) and (\ref{eq:A2}), the metric $g_A:=\frac{1}{\xi}g_{0}$ where $\xi$ is given by (\ref{eq:xi}) descends down to the isosceles Hopf manifold $M^n_{\phi}$, and $g_A$ has pointwise constant Strominger holomorphic sectional curvature $f$ given by (\ref{eq:f}).

Conversely, any Hermitian metric on $M^n_{\phi}$ conformal to the standard metric $g_h$ and having pointwise constant Strominger holomorphic sectional curvature must be a constant multiple of one of those $g_A$.
\end{proposition}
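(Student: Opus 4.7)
The plan is to handle the two directions of the proposition separately, leaning on Lemma \ref{lemma4} for the curvature identity and on a Taylor-series argument for the rigidity in the converse.

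For the forward direction, given a symmetric $A$ obeying (\ref{eq:A1}) and (\ref{eq:A2}), I would first check that $\xi$ from (\ref{eq:xi}) is strictly positive on $\C^n\setminus\{0\}$. By Chern's lemma there exists a unitary $U$ with $\,^t\!UAU=\mbox{diag}(b_1,\ldots,b_n)$, $b_i\geq 0$; writing $w=U^{-1}z$ recasts $\xi$ as $|w|^2+\sum_i b_i(w_i^2+\overline{w_i^2})$, which is positive on $\C^n\setminus\{0\}$ precisely when every $b_i<\tfrac12$, i.e.\ (\ref{eq:A2}), since the $b_i^2$ are the eigenvalues of $A\bar A$. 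The $\phi$-covariance $\phi^{\ast}\xi=a^2\xi$ then follows from $\phi^{\ast}(\,^t\!zAz)=\,^t\!z(DAD)z=a^2\,^t\!zAz$ via (\ref{eq:A1}), so $g_A=\xi^{-1}g_0$ descends to $M^n_\phi$. A direct calculation yields $\xi_{i\bar j}=\delta_{ij}$, so Lemma \ref{lemma4} applies with $\lambda=1$ and gives $H^s=f$; substituting $\xi_i=\bar z_i+2(Az)_i$ into $f=1-\xi^{-1}\sum_r|\xi_r|^2$ recovers formula (\ref{eq:f}).

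For the converse, I follow the outline sketched in the paragraphs preceding the proposition. Given $\tilde g=F^{-1}g_h$ on $M^n_\phi$ with pointwise constant $H^s$, set $\xi=(\pi^{\ast}F)|z|^2$ on $\C^n\setminus\{0\}$; Lemma \ref{lemma4} gives $\xi_{i\bar j}=\lambda\delta_{ij}$ for some real constant $\lambda$, so $\xi-\lambda|z|^2$ is pluriharmonic. Since $\C^n\setminus\{0\}$ is simply connected for $n\geq 2$, I write $\xi=\lambda|z|^2+\psi+\bar\psi$ for some holomorphic $\psi$. Comparing $\phi^{\ast}\xi=a^2\xi$ with $\phi^{\ast}|z|^2=a^2|z|^2$ forces $\phi^{\ast}\psi-a^2\psi$ to be a purely imaginary constant, which I absorb into $\psi$ to arrive at $\phi^{\ast}\psi=a^2\psi$ exactly.

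The main obstacle, and the only real content beyond bookkeeping, is pinning down the form of $\psi$. The plan is to apply Hartogs (valid since $n\geq 2$) to extend $\psi$ across the origin and expand it as a Taylor series $\psi=\sum_\alpha c_\alpha z^\alpha$; the equivariance $\phi^{\ast}\psi=a^2\psi$ translates to $c_\alpha(a^{|\alpha|}\rho^\alpha-a^2)=0$ for every multi-index $\alpha$. Taking moduli and using $|\rho^\alpha|=1$ forces $a^{|\alpha|}=a^2$, hence $|\alpha|=2$, and then the residual condition $\rho^\alpha=1$ is precisely the entrywise constraint on the symmetric coefficient matrix $A$ (read off from $\psi=\,^t\!zAz$) that becomes (\ref{eq:A1}). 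Positivity of $\xi$ forces $\lambda>0$, so rescaling $\tilde g$ by $\lambda$ normalizes $\lambda=1$, and the remaining positivity constraint becomes (\ref{eq:A2}) by the same Chern-diagonalization argument used in the forward direction. Once the quadratic rigidity of $\psi$ is established, the proposition follows.
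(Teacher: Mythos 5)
Your proposal is correct and follows essentially the same route as the paper's own derivation (the discussion preceding the proposition): Lemma \ref{lemma4} reduces everything to $\xi_{i\overline{j}}=\lambda\delta_{ij}$, the pluriharmonic difference $\xi-\lambda|z|^2$ is written as $\psi+\overline{\psi}$, the $\phi$-equivariance of $\psi$ forces a homogeneous quadratic $\,^t\!zAz$, and positivity plus Chern diagonalization yields (\ref{eq:A1})--(\ref{eq:A2}); your Hartogs-plus-Taylor argument simply makes explicit the step the paper compresses into ``expanding $\psi$ into Laurent series.'' The only blemish is a misplaced factor in the chain $\phi^{\ast}(\,^t\!zAz)=a^2\,^t\!z(DAD)z=a^2\,^t\!zAz$, which is a harmless bookkeeping slip.
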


We will call (any constant multiple of) these $g_A$ {\em admissible metrics} on the isosceles Hopf manifold $M^n_{\phi}$. When $A=0$, the corresponding metric is simply the standard Hopf metric $g_h$, which has $H^s\equiv 0$.

If $g_A$ has constant $H^s$, say $H^s=c$, then by (\ref{eq:f}) we get
$$ \big( 4\,\,^t\!z A\,\overline{Az} + \,^t\!z A z +  \overline{\,^t\!z A z} \big) = (-c) \big( |z|^2 + \,^t\!z A z +  \overline{\,^t\!z A z} \big) $$
for any $0\neq z\in {\mathbb C}^n$. This forces $-c=1$, so $4\,\,^t\!z A\,\overline{Az}=|z|^2$ which leads to $A\,\overline{A}=\frac{1}{4}I$, contradicting with condition (\ref{eq:A2}). So we know that:

{\em Any admissible metric $g_A$ with $A\neq 0$ cannot have constant $H^s$.}

Therefore we get a proof of Corollary \ref{cor2} under the assumption that we have already established Theorem \ref{thm1}.

\vspace{0.3cm}

\section{The surface case}

We will prove Theorem \ref{thm1} in this section. As in \cite{ADM}, the first step is to show that any Hermitian surface with pointwise constant Strominger holomorphic sectional curvature must be self-dual:

\begin{proposition} \label{prop2}
Let $(M^2,g)$ be a Hermitian surface. Assume that the Strominger holomorphic sectional curvature is pointwise constant, namely $H^s=f$ where $f$ is a real-valued function on $M^2$. Then $W^-=0$, namely, $M$ is self-dual.
\end{proposition}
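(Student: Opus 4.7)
The plan is pointwise: since both $W^-$ and the hypothesis $H^s = f$ are frame-independent pointwise objects, we fix an arbitrary $p \in M^2$ and aim to show $W^-|_p = 0$. By Lemma \ref{lemma1} applied with $D = \nabla^c$, we pick a unitary frame $e_1, e_2$ near $p$ with $\theta|_p = 0$, so the $\nabla^c$-covariant derivatives in Lemma \ref{lemma2} collapse to directional derivatives at $p$.

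The central algebraic input is that $H^s = f$ is equivalent to
\[
\widehat{R}^s_{i\bar{j}k\bar{\ell}} = \tfrac{f}{2}\bigl(\delta_{ij}\delta_{k\ell}+\delta_{i\ell}\delta_{kj}\bigr),
\]
as noted just before Lemma \ref{lemma4}. Unpacking this for $n=2$ and any unitary frame gives the identities $R^s_{1\bar{1}1\bar{1}} = R^s_{2\bar{2}2\bar{2}} = f$, $R^s_{1\bar{2}1\bar{2}}=0$, the linear relations
\[
R^s_{1\bar{1}1\bar{2}} + R^s_{1\bar{2}1\bar{1}} = 0,\qquad R^s_{1\bar{2}2\bar{2}} + R^s_{2\bar{2}1\bar{2}} = 0,
\]
and the sum $R^s_{1\bar{1}2\bar{2}} + R^s_{2\bar{2}1\bar{1}} + R^s_{1\bar{2}2\bar{1}} + R^s_{2\bar{1}1\bar{2}} = 2f$.

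Next I combine the two identities in Lemma \ref{lemma2} to eliminate first-order torsion derivatives in favour of curvature, yielding a closed-form relation
\[
R_{k\bar{\ell}i\bar{j}} = \tfrac{1}{2}R^s_{k\bar{\ell}i\bar{j}} + \tfrac{1}{2}R^c_{k\bar{\ell}i\bar{j}} + Q_{k\bar{\ell}i\bar{j}}(T),
\]
with $Q$ an explicit quadratic expression in the Chern torsion. In dimension two the Chern torsion is carried by only two scalars $t_k := T^k_{12}$, $k=1,2$, so every $Q_{k\bar{\ell}i\bar{j}}$ is a short polynomial in $t_1, t_2, \bar{t}_1, \bar{t}_2$. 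Substituting this identity into Lemma \ref{lemma3}, the case $W^-_1 = R_{1\bar{2}1\bar{2}}$ is essentially immediate: with $(k,\ell,i,j) = (1,2,1,2)$ every monomial of $Q$ vanishes through the trivial relation $T^k_{ii}\equiv 0$, and Lemma \ref{lemma2} simultaneously forces $R^c_{1\bar{2}1\bar{2}} = R^s_{1\bar{2}1\bar{2}}$, whence $W^-_1 = R^s_{1\bar{2}1\bar{2}} = 0$ by the relations of the previous paragraph.

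For $W^-_2$ and $W^-_3$ I expect a parallel but more delicate cancellation. The quadratic-torsion contributions coming from the various $Q_{k\bar{\ell}i\bar{j}}$ inside the combinations prescribed by Lemma \ref{lemma3} should cancel pairwise, leaving only an algebraic sum of $R^s$ and $R^c$ entries; the $R^s$ part is then killed by the symmetric-part identities above. The surviving $R^c$ contribution is attacked by invoking the Riemannian pair-swap symmetry $R_{k\bar{\ell}i\bar{j}} = R_{i\bar{j}k\bar{\ell}}$ together with a second application of Lemma \ref{lemma2}, which produces additional linear relations among $R^s$, $R^c$, and torsion monomials that collapse the remaining $R^c$-combinations into quantities already known to vanish. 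The main obstacle lies precisely here: checking that the quadratic torsion monomials really do cancel in the specific combinations dictated by Lemma \ref{lemma3}, and that the residual Chern entries can be re-expressed via the already-constrained Strominger curvature components. Everything else is routine linear algebra.
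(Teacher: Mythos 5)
Your setup and your handling of $W^-_1$ are correct, but the substance of the proposition is the vanishing of $W^-_2$ and $W^-_3$, and there you offer only an expectation, not an argument --- and the mechanism you describe is not the one that actually operates. The structural problem is that your combination $R_{k\overline{\ell}i\overline{j}}=\tfrac12 R^s_{k\overline{\ell}i\overline{j}}+\tfrac12 R^c_{k\overline{\ell}i\overline{j}}+Q_{k\overline{\ell}i\overline{j}}(T)$ retains the Chern curvature, which the hypothesis does not constrain at all. Concretely, for $W^-_2\propto R_{1\overline{2}2\overline{2}}-R_{1\overline{2}1\overline{1}}$ the quadratic terms do cancel ($Q_{1\overline{2}2\overline{2}}=Q_{1\overline{2}1\overline{1}}=-T^2_{12}\overline{T^1_{12}}$), but what survives is $\tfrac12\bigl(R^s_{1\overline{2}2\overline{2}}-R^s_{1\overline{2}1\overline{1}}\bigr)+\tfrac12\bigl(R^c_{1\overline{2}2\overline{2}}-R^c_{1\overline{2}1\overline{1}}\bigr)$; the hypothesis only gives sums such as $R^s_{1\overline{2}2\overline{2}}+R^s_{2\overline{2}1\overline{2}}=0$, so the $R^s$ difference is \emph{not} ``killed by the symmetric-part identities,'' and the $R^c$ difference is controlled by nothing. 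To remove it you must use the pair symmetry $R_{xyzw}=R_{zwxy}$ on both presentations of each Riemannian entry and then push $R^c$ back through Lemma \ref{lemma2}, at which point the first-order torsion derivatives $T^j_{ik,\overline{\ell}}$ reappear and their cancellation in the specific combinations of Lemma \ref{lemma3} has to be verified; this is exactly the step you flag as ``the main obstacle'' and leave unchecked. As written, then, two of the three components rest on an unverified cancellation whose description is inaccurate, which is a genuine gap.

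The paper closes this gap with one cleaner move that your proposal is missing: instead of a combination that keeps $R^c$, subtract the two identities of Lemma \ref{lemma2} from each other, so $R^c$ drops out and $R_{k\overline{\ell}i\overline{j}}-R^s_{k\overline{\ell}i\overline{j}}$ equals torsion derivatives plus torsion quadratics; then take the symmetrization $\widehat{\ }$ over $(i,k)$ and $(j,\ell)$. Since $T^j_{ik}$ (hence also its covariant derivative) is skew in $i,k$, the derivative terms and the $T^r_{ik}\overline{T^r_{j\ell}}$ term vanish identically under symmetrization, yielding
\begin{equation*}
\widehat{R}_{k\overline{\ell}i\overline{j}}-\widehat{R}^s_{k\overline{\ell}i\overline{j}}=2\sum_r\bigl(T^j_{ir}\overline{T^k_{\ell r}}+T^{\ell}_{kr}\overline{T^i_{jr}}+T^j_{kr}\overline{T^i_{\ell r}}+T^{\ell}_{ir}\overline{T^k_{jr}}\bigr),
\end{equation*}
with no Chern curvature and no derivatives left to track. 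Because $R$ enjoys the pair symmetry, each entry needed in Lemma \ref{lemma3} equals a hatted one, and for $n=2$ the right-hand side reduces to monomials in $T^1_{12},T^2_{12}$ that cancel in the three $W^-$ combinations in a few lines. If you insist on your route you will, in effect, be re-deriving this identity with extra bookkeeping; the efficient fix is to replace your decomposition by the subtraction-plus-symmetrization identity above and redo $W^-_2$, $W^-_3$ from it.
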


\begin{proof}
Again denote by $\widehat{P}_{i\overline{j}k\overline{\ell}} $ the symmetrization of $P_{i\overline{j}k\overline{\ell}}$. The assumption $H^s=f$ is equivalent to $\widehat{R}^s_{i\overline{j}k\overline{\ell}} = \frac{f}{2}\big( \delta_{ij}\delta_{k\ell} + \delta_{i\ell} \delta_{kj} \big)$ under any unitary frame $e$. In particular,
\begin{equation*}
\widehat{R}^s_{1\overline{2}1\overline{2}} = \widehat{R}^s_{1\overline{1}1\overline{2}} = \widehat{R}^s_{1\overline{2}2\overline{2}} =0, \ \ \ \ \widehat{R}^s_{1\overline{1}1\overline{1}} = \widehat{R}^s_{2\overline{2}2\overline{2}} =f.
\end{equation*}
Subtracting the two equalities of Lemma \ref{lemma2} yields
\begin{equation*}\label{RRs}
R_{k\overline{\ell}i\overline{j}} - R^s_{k\overline{\ell}i\overline{j}} = T_{ik,\overline{\ell}}^j + \overline{T^i_{j\ell , \overline{k}}}   + \sum_r \big(  -3 T^{r}_{ik}\overline{T^r_{j\ell}} +3 T^{j}_{kr}\overline{T^i_{\ell r}} - T^{\ell}_{ir}\overline{T^k_{j r}}  \big).
\end{equation*}
Taking symmetrization, we get
\begin{equation}\label{hatRRs}
\widehat{R}_{k\overline{\ell}i\overline{j}} - \widehat{R}^s_{k\overline{\ell}i\overline{j}} = 2 \sum_r \big( T^{j}_{ir}\overline{T^k_{\ell r}} + T^{\ell }_{kr}\overline{T^i_{jr}} + T^{j}_{kr}\overline{T^i_{\ell r}} + T^{\ell}_{ir}\overline{T^k_{j r}}  \big).
\end{equation}
Since $n=2$, it gives us  $\widehat{R}_{1\overline{2}1\overline{2}} -\widehat{R}^s_{1\overline{2}1\overline{2}} = 8\sum_{r=1}^2 T^{2 }_{1r}\overline{T^1_{2r}} = 0$. Hence
\begin{equation}\label{eq:W-1}
 R_{1\overline{2}1\overline{2}} = \widehat{R}_{1\overline{2}1\overline{2}} = \widehat{R}^s_{1\overline{2}1\overline{2}} = 0.
 \end{equation}
Since $R_{xyzw}=R_{zwxy}$, we have
$$ R_{1\overline{1}1\overline{2}} = \frac{1}{2}\big( R_{1\overline{1}1\overline{2}} + R_{1\overline{2}1\overline{1}} \big) = \widehat{R}_{1\overline{1}1\overline{2}} = \widehat{R}^s_{1\overline{1}1\overline{2}} + 4 \sum_{r=1}^2  T^{2}_{1r}\overline{T^1_{1 r}} = 4 T^{2}_{12}\overline{T^1_{12}}.$$
Similarly, we have $ R_{1\overline{2}2\overline{2}} = 4 \sum_{r=1}^2 T^{2}_{2r}\overline{T^1_{2r}} = 4 T^{2}_{12}\overline{T^1_{12}}$. Therefore
\begin{equation}\label{eq:W-2}
 R_{1\overline{1}1\overline{2}} - R_{1\overline{2}2\overline{2}} = 0.
 \end{equation}
By (\ref{hatRRs}), we have
$$ R_{1\overline{1}1\overline{1}} = \widehat{R}^s_{1\overline{1}1\overline{1}} + 8\sum_r |T^1_{1r}|^2 = f+ 8 |T^1_{12}|^2.$$
Similarly, $R_{2\overline{2}2\overline{2}} = f + 8  |T^2_{12}|^2$. Also by (\ref{hatRRs}), we have
$$ \widehat{R}_{1\overline{1}2\overline{2}} = \widehat{R}^s_{1\overline{1}2\overline{2}} + 2\sum_r \big( |T^2_{1r}|^2 + |T^1_{2r}|^2\big) = \frac{f}{2} + 2\big( |T^1_{12}|^2 + |T^2_{12}|^2\big). $$
From these, we get
\begin{equation}\label{eq:W-3}
 4\widehat{R}_{1\overline{1}2\overline{2}} - R_{1\overline{1}1\overline{1}} - R_{2\overline{2}2\overline{2}} = 0.
 \end{equation}
Since $ 4\widehat{R}_{1\overline{1}2\overline{2}} = 2\big( R_{1\overline{1}2\overline{2}} + R_{1\overline{2}2\overline{1}} \big) $, by Lemma \ref{lemma3} and (\ref{eq:W-1})-(\ref{eq:W-3}), we know that $W^-=0$. That is, the Hermitian surface $(M^2, g)$ is self-dual. This completes the proof of Proposition \ref{prop2}.
\end{proof}

Next, we consider the behavior of Strominger holomorphic sectional curvature under conformal changes. Let $(M^n,g)$ be a Hermitian manifold, and  $\tilde{g}=e^{2u}g$ a Hermitian metric on $M^n$ conformal to $g$. Here $u\in C^{\infty}(M)$ is a smooth real-valued function. Let $e$ be a local unitary frame of $g$, with dual coframe $\varphi$. Then $\tilde{e}=e^{-u}e$ and $\tilde{\varphi}=e^u\varphi$ become unitary frame and dual coframe for $\tilde{g}$. Under respective unitary frames, the connection matrix and torsion for Chern connection are related by (cf. in the proof of Theorem 3 in \cite{YZZ}):
\begin{equation}
\tilde{\theta} = \theta +(\partial u - \overline{\partial} u)I, \ \ \ \tilde{T}^j_{ik} = e^{-u}\{T^j_{ik} + u_i \delta_{jk} - u_k \delta_{ji} \}
\end{equation}
where $u_k=e_k(u)$. From this, we get the expression for $\gamma$, $\tilde{\gamma}$ hence
\begin{equation}
\tilde{\theta}^s_{ij} - \theta^s_{ij} = 2 u_i \varphi_j - \partial u \delta_{ij} - 2 u_{\overline{j}} \overline{\varphi}_i + \overline{\partial} u\delta_{ij} .
\end{equation}
Fix a point $p\in M$. Let $e$ be local unitary frame of $(M^n,g)$ near $p$ such that $\theta^s|_p=0$. Then by structure equation $\overline{\partial}\varphi = -2\overline{\gamma'} \varphi$ at $p$, so we have
\begin{equation} \label{Thetas11}
(\tilde{\Theta}^s_{ij})^{1,1} - ( \Theta^s_{ij})^{1,1} =  2\overline{\partial}(u_i\varphi_j) - 2 \partial (u_{\overline{j}} \overline{\varphi}_i) + 2 \partial \overline{\partial} u \delta_{ij} + 4u_i u_{\overline{j}} \varphi_r \overline{\varphi}_r - 4|u_r|^2 \varphi_j \overline{\varphi}_i.
\end{equation}
From this, a straight forward computation leads to the following

\begin{proposition} \label{prop3}
Let $(M^n,g)$ be a Hermitian manifold, $\tilde{g}=e^{2u}g$ a metric conformal to $g$. Under any unitary frames $e$ for $g$ and $\tilde{e}=e^{-u}e$ for $\tilde{g}$, the respective Strominger curvature components are related by
\begin{eqnarray}
e^{2u}\tilde{R}^s_{k\bar{\ell}i\bar{j}} - R^s_{k\bar{\ell}i\bar{j}} & = & \delta_{i\ell}  \delta_{kj} (-4|u_r|^2) + \delta_{k\ell} (4u_iu_{\overline{j}}) + \delta_{ij} (2u_{k\overline{\ell}} - 4u_r \overline{T^k_{r\ell }}) + \delta_{kj} (-2 u_{i\overline{\ell}})  + \nonumber \\ \label{eq:18}
& & + \ \delta_{i\ell} (-2u_{k\overline{j}} - 4u_{\overline{r}} T^j_{rk}  + 4u_r \overline{T^k_{rj}}) +4u_i \overline{T^k_{j\ell }} + 4u_{\overline{j}} T^{\ell}_{ik}
\end{eqnarray}
for any $1\leq i,j,k,\ell \leq n$, where $r$ is summed up from $1$ to $n$, and subscripts in $u$ stand for covariant derivatives with respect to $\nabla^s$ of $g$.
\end{proposition}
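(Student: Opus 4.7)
The plan is to continue directly from the already-established identity (17) by extracting the coefficient of $\varphi_k\wedge\bar\varphi_\ell$ in each of its five right-hand terms at the base point $p$ where $\theta^s|_p=0$. Since the rescaled unitary coframe satisfies $\tilde\varphi_k\wedge\overline{\tilde\varphi}_\ell=e^{2u}\varphi_k\wedge\bar\varphi_\ell$, the coefficient of $\varphi_k\wedge\bar\varphi_\ell$ in the $(1,1)$-part of $\tilde\Theta^s_{ij}$ is precisely $e^{2u}\tilde R^s_{k\bar\ell i\bar j}$, so a coefficient match against (17) will yield (18).

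To carry this out, I first collect the three basic expansions valid at $p$. From the structure equation $\bar\partial\varphi=-2\overline{\gamma'}\varphi$ at $p$ together with equation (4), $\bar\partial\varphi_j|_p=2\sum_{k,r}\overline{T^k_{jr}}\,\varphi_k\wedge\bar\varphi_r$, and by conjugation $\partial\bar\varphi_i|_p=-2\sum_{k,r}T^r_{ik}\,\varphi_k\wedge\bar\varphi_r$. For the function $u$, $\nabla^s$-covariant derivatives reduce to directional ones at $p$, so $\bar\partial u_i|_p=\sum_\ell u_{i\bar\ell}\bar\varphi_\ell$ and $\partial u_{\bar j}|_p=\sum_k u_{\bar j\,k}\varphi_k$. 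Applying the product rule to $\bar\partial(u_i\varphi_j)$, $\partial(u_{\bar j}\bar\varphi_i)$ and $\partial\bar\partial u=\partial\sum_\ell u_{\bar\ell}\bar\varphi_\ell$, the first three terms of (17) immediately contribute coefficients $-2\delta_{jk}u_{i\bar\ell}+4u_i\overline{T^k_{j\ell}}$, $-2\delta_{i\ell}u_{\bar j\,k}+4u_{\bar j}T^\ell_{ik}$ and $2\delta_{ij}\bigl(u_{\bar\ell\,k}-2\sum_r u_{\bar r}T^\ell_{rk}\bigr)$, while the last two terms read off trivially as $4\delta_{k\ell}u_iu_{\bar j}$ and $-4\delta_{jk}\delta_{i\ell}|u_r|^2$.

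The essential step is to rewrite the mixed-order derivatives $u_{\bar j\,k}$ and $u_{\bar\ell\,k}$ in the canonical order $u_{k\bar j}$, $u_{k\bar\ell}$ that appears in (18). This reduces to the commutation identity $u_{\bar\ell\,k}-u_{k\bar\ell}=[e_k,\bar e_\ell](u)|_p=-T^s(e_k,\bar e_\ell)(u)|_p$, which requires the mixed-type Strominger torsion. Using $\nabla^s-\nabla^c=2\gamma$ together with $\gamma_{ij}(e_k)=T^j_{ik}$ and $\gamma_{ij}(\bar e_k)=-\overline{T^i_{jk}}$ from equation (4), and the $J$-commutativity of $\gamma$ that dictates its action on $(0,1)$-vectors, I will obtain $T^s(e_k,\bar e_\ell)=2\sum_j\overline{T^k_{j\ell}}\,e_j-2\sum_j T^\ell_{jk}\,\bar e_j$, hence $u_{\bar\ell\,k}-u_{k\bar\ell}=-2\sum_r\overline{T^k_{r\ell}}u_r+2\sum_r T^\ell_{rk}u_{\bar r}$. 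Substituting this into the contributions from the three differentiated terms, the $T^\ell_{rk}u_{\bar r}$ piece produced by the conversion in Term~3 is cancelled exactly by the $-2\sum u_{\bar r}T^\ell_{rk}$ coming from $\partial\bar\varphi_\ell$, while the analogous corrections in Term~2 combine with $4u_{\bar j}T^\ell_{ik}$ to form the bracket $\delta_{i\ell}\bigl(-2u_{k\bar j}-4u_{\bar r}T^j_{rk}+4u_r\overline{T^k_{rj}}\bigr)$ displayed in~(18).

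The main obstacle is purely bookkeeping: aligning all torsion-linear corrections and verifying the indicated cancellations so that the assembled expression matches (18) verbatim. Beyond the single commutation identity for $u_{\bar\ell\,k}-u_{k\bar\ell}$, no further geometric input is needed, and every contribution is forced by the structure equation and the product rule.
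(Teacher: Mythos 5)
Your proposal is correct and follows essentially the same route as the paper: extracting the coefficient of $\varphi_k\wedge\overline{\varphi}_\ell$ from the identity for $(\tilde{\Theta}^s)^{1,1}-(\Theta^s)^{1,1}$ at a point where $\theta^s$ vanishes, using $\overline{\partial}\varphi=-2\overline{\gamma'}\varphi$ there, and then reordering the mixed second derivatives via the same commutation formula $u_{\overline{j}k}-u_{k\overline{j}}=2\sum_r(u_{\overline{r}}T^j_{rk}-u_r\overline{T^k_{rj}})$, which you derive (equivalently) from the mixed-type Strominger torsion rather than from $[e_k,\overline{e}_j]=\nabla^c_{e_k}\overline{e}_j-\nabla^c_{\overline{e}_j}e_k$ as the paper does. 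The only quibble is a phrasing slip near the end: the $\delta_{i\ell}$ bracket in (\ref{eq:18}) comes entirely from converting $-2\delta_{i\ell}u_{\overline{j}k}$, while $4u_{\overline{j}}T^\ell_{ik}$ stays as a separate term; your assembled coefficients nonetheless match (\ref{eq:18}) exactly.
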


In the above computation, we used the following commutation formula
$$ u_{\overline{j}k} - u_{k\overline{j}} = 2\sum_r \big( u_{\overline{r}} T^j_{rk} - u_r \overline{T^k_{rj}} \big) .$$
At any given point $p\in M$, if we choose the unitary frame $e$ so that $\theta^s|_p=0$, then $\theta|_p=-2\gamma|_p$, and
$$ [e_k, \overline{e}_j] = \nabla^c_{e_k}\overline{e}_j - \nabla^c_{\overline{e}_j} e_k = -2 \gamma_{e_k}\overline{e}_j +2 \gamma_{\overline{e}_j} e_k = 2\sum_r \big( T^j_{rk} \overline{e}_r - \overline{T^k_{rj}} e_r\big).$$
This gives us the above commutation formula, which is independent of the choice of the unitary frame. Taking the symmetrization of (\ref{eq:18}), one has
\begin{eqnarray*}
e^{2u}\widehat{\tilde{R}^s}_{k\overline{\ell}i\overline{j}} - \widehat{R}^s_{k\overline{\ell}i\overline{j}} & = & (\delta_{ij}  \delta_{k\ell}+ \delta_{i\ell}  \delta_{kj}) (-2|u_r|^2) + \delta_{ij} ( -\frac{1}{2}u_{k\overline{\ell}}+ u_ku_{\overline{\ell}} - u_{\overline{r}} T^{\ell}_{rk} ) +\nonumber \\
&& + \ \delta_{kj} ( -\frac{1}{2}u_{i\overline{\ell}}+ u_iu_{\overline{\ell}} - u_{\overline{r}} T^{\ell}_{ri} ) + \delta_{i\ell} ( -\frac{1}{2}u_{k\overline{j}}+ u_ku_{\overline{j}} - u_{\overline{r}} T^{j}_{rk} ) + \nonumber \\
&& + \ \delta_{k\ell } ( -\frac{1}{2}u_{i\overline{j}}+ u_iu_{\overline{j}} - u_{\overline{r}} T^{j}_{ri} )
\end{eqnarray*}
where $r$ is summed up. Following \cite{ADM} and making the substitution $F=e^{-2u}>0$, we get
\begin{eqnarray}
4\widehat{\tilde{R}^s}_{k\overline{\ell}i\overline{j}} - 4F\widehat{R}^s_{k\overline{\ell}i\overline{j}} & = &  -\frac{2}{F}|F_r|^2 (\delta_{ij}  \delta_{k\ell}+ \delta_{i\ell}  \delta_{kj})  + \delta_{ij} ( F_{k\overline{\ell}}+ 2 F_{\overline{r}} T^{\ell}_{rk} ) + \nonumber \\
&& + \  \delta_{kj} ( F_{i\overline{\ell}}+ 2 F_{\overline{r}} T^{\ell}_{ri} ) +  \delta_{i\ell} ( F_{k\overline{j}}+ 2 F_{\overline{r}} T^{j}_{rk} ) +  \delta_{k\ell } ( F_{i\overline{j}}+ 2 F_{\overline{r}} T^{j}_{ri} ) \label{eq:F}
\end{eqnarray}

From this formula, we immediately get the following corollary, which states that on a compact complex space form, any conformal change of the standard metric cannot be a Strominger space form unless it is a scaling (hence K\"ahler).

\begin{proposition} \label{prop4}
Let $(M^n,g)$ be a compact K\"ahler manifold with constant holomorphic sectional curvature, where $n\geq 2$. If $\tilde{g}=\frac{1}{F}g$ is a conformal metric with pointwise constant Strominger holomorphic sectional curvature. Then $F$ is constant.
\end{proposition}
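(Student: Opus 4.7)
The plan is to exploit the K\"ahler assumption (all Chern torsion components vanish, and the three canonical connections coincide) to convert the master formula (\ref{eq:F}) into a rigid pointwise equation on the conformal factor $F$, and then use compactness together with $d\omega=0$ to conclude that $F$ must be constant.

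First I would substitute into (\ref{eq:F}) the two ``space form'' identities $\widehat{R^s}_{k\overline{\ell}i\overline{j}}=\tfrac{c}{2}(\delta_{ij}\delta_{k\ell}+\delta_{i\ell}\delta_{kj})$ (which holds because $g$ is K\"ahler with constant holomorphic sectional curvature $c$, so $\widehat{R^s}=\widehat{R^c}=\widehat{R}$) and $\widehat{\tilde R^s}_{k\overline{\ell}i\overline{j}}=\tfrac{\tilde f}{2}(\delta_{ij}\delta_{k\ell}+\delta_{i\ell}\delta_{kj})$, with all torsion terms gone. At any point $p$ in a unitary frame with $\theta^s|_p=0$ this reduces to
\[
A\,(\delta_{ij}\delta_{k\ell}+\delta_{i\ell}\delta_{kj})=\delta_{ij}F_{k\overline{\ell}}+\delta_{kj}F_{i\overline{\ell}}+\delta_{i\ell}F_{k\overline{j}}+\delta_{k\ell}F_{i\overline{j}},
\]
where $A=2(\tilde f-Fc+F^{-1}\sum_r|F_r|^2)$. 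Taking $i=j=k=\ell=m$ yields $F_{m\overline{m}}=A/2$, independent of $m$; taking $i=j$ with $k\ne\ell$ (for instance $i=j=1$, $k=1$, $\ell=2$) forces $F_{k\overline{\ell}}=0$ for $k\ne\ell$. Thus $F_{i\overline{j}}=\lambda\,\delta_{ij}$ at $p$ for some scalar $\lambda(p)$. Since $\delta_{ij}$ is invariant under unitary change of frame and $F$ is a function on a K\"ahler manifold (so $F_{i\overline{j}}$ really are the components of $\sqrt{-1}\,\partial\bar\partial F$ in the chosen frame at $p$), this translates to the global identity $\sqrt{-1}\,\partial\bar\partial F=\lambda\,\omega$ on $M$.

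The remainder is a standard compact-K\"ahler argument. Applying $\bar\partial$ to $\sqrt{-1}\partial\bar\partial F=\lambda\,\omega$ and using $\bar\partial^2=0$ together with $\bar\partial\omega=0$ (K\"ahler) gives $\bar\partial\lambda\wedge\omega=0$. For $n\ge 2$ the map $\alpha\mapsto\alpha\wedge\omega$ from $\Lambda^{0,1}$ to $\Lambda^{1,2}$ is injective, so $\bar\partial\lambda=0$. Hence $\lambda$ is holomorphic on $M$, and by compactness $\lambda$ is a constant. Integrating $\sqrt{-1}\partial\bar\partial F\wedge\omega^{n-1}=\lambda\,\omega^n$ and using Stokes on the left (since $\omega$ is closed) forces $\lambda=0$. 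Thus $\sqrt{-1}\partial\bar\partial F\equiv 0$, i.e.\ $F$ is pluriharmonic, hence harmonic with respect to the underlying Riemannian metric, hence constant on the compact manifold $M$.

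The only genuine obstacle is the index manipulation in the first step; once the pointwise form $\sqrt{-1}\partial\bar\partial F=\lambda\omega$ is in hand the rest follows from textbook K\"ahler identities.
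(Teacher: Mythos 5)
Your proposal is correct and follows essentially the same route as the paper: both specialize (\ref{eq:F}) to the K\"ahler case (all torsion terms vanish) to obtain $F_{i\overline{j}}=\lambda\,\delta_{ij}$ with $\lambda=\tilde{c}-Fc+\frac{1}{F}\sum_r|F_r|^2$, and then show $\lambda$ is a constant which compactness forces to be zero, so $F$ is constant. The only cosmetic difference is in the finish: the paper proves $\lambda$ is constant via the commutation identity $F_{i\overline{i}\overline{j}}=F_{i\overline{j}\overline{i}}$ and concludes with $\Delta F=n\lambda$, while you recast the identity as $\sqrt{-1}\,\partial\overline{\partial}F=\lambda\,\omega$ and use $\overline{\partial}\omega=0$, injectivity of wedging with $\omega$, and Stokes --- an equivalent, equally standard ending.
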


\begin{proof}
Denote by $c$ the holomorphic sectional curvature of $g$ and by $\tilde{c}$ the Strominger holomorphic sectional curvature of $\tilde{g}$. Here $c$ is a constant, and $\tilde{c}$ is a real valued function. Since $g$ is K\"ahler, formula (\ref{eq:F}) gives us
$$ 2\lambda (\delta_{ij}\delta_{k\ell} + \delta_{i\ell}\delta_{kj}) = \delta_{ij}F_{k\overline{\ell}} +  \delta_{k\ell}F_{i\overline{j}} + \delta_{i\ell}F_{k\overline{j}} + \delta_{kj}F_{i\overline{\ell}},$$
where $\lambda = \tilde{c}-Fc + \frac{|F_r|^2}{F}$, and subscripts of $F$ stand for covariant derivatives with respect to the Strominger (which coincide with the Levi-Civita) connection of $g$. Since $n\geq 2$, we deduce
$$ F_{i\overline{j}} =\delta_{ij} \lambda, \ \ \ \ \forall \ 1\leq i, j\leq n. $$
Since $R_{\bar{i}\bar{j}\ast \ast }=0$, we have $F_{i\bar{i}\bar{j}}=F_{i\bar{j}\bar{i}}$. Now for $i\neq j$, we get
$$ \lambda_{\bar{j}}=F_{i\bar{i}\bar{j}}=F_{i\bar{j}\bar{i}} = 0 $$
since $F_{i\bar{j}}=0$. This means that $\lambda$ is a constant. Hence by $\Delta F =n\lambda $ we know that $\lambda =0$ and $F$ is a constant. This completes the proof of the proposition.
\end{proof}

Now we are ready to prove the main result of this article, Theorem \ref{thm1}.

\begin{proof}[\textbf{Proof of Theorem \ref{thm1}}] Let $(M^2,g)$ be a compact Hermitian surface with pointwise constant Strominger holomorphic sectional curvature, namely, $H^s=c$ where $c$ is a smooth real-valued function on $M^2$. By Proposition \ref{prop2}, we know that $(M^2,g)$ is self-dual. Hence by \cite[Theorem 1']{ADM}, we know that $g$ is conformal to a metric $h$ on $M^2$, which is either  (1) a complex space form, or (2) the non-flat, conformally flat K\"ahler metric, or (3) an isosceles Hopf surface. Write $g=\frac{1}{F}h$, where $F$ is a positive smooth function on $M^2$. For case (1), Proposition \ref{prop4} says that $F$ must be constant, hence $(M^2,g)$ itself is a complex space form. The other two cases can be ruled out by similar method as in \cite{ADM}, with minor modifications. For the sake of completeness we give the details below.

In case (2), $M^2$ is a holomorphic ${\mathbb P}^1$-bundle over a curve $C$ of genus at least $2$, and $h$ is locally a product metric where the factors have constant holomorphic sectional curvature $-1$ (for base curve) or $1$ (for the fiber). Following \cite{ADM}, let $(z_1,z_2)$ be local holomorphic coordinates in $M^2$ such that the K\"ahler  form of $h$ is given  by
\begin{equation*}
 \omega_h = 2\sqrt{-1} \frac{dz_1\wedge d\overline{z}_1}{(1-|z_1|^2)^2} + 2\sqrt{-1} \frac{dz_2\wedge d\overline{z}_2}{(1+|z_2|^2)^2} = \omega_1 + \omega_2.
 \end{equation*}
Let $e$ be a local unitary frame of $(M^2,h)$ such that $e_i$ is parallel to $\frac{\partial}{\partial z_i}$ for $i=1$ and $2$. Then under $e$, we have $R^{(h)}_{1\overline{1}1\overline{1}} = -1$ and $R^{(h)}_{2\overline{2}2\overline{2}} = 1$, while the other curvature components vanish. Suppose $g=e^{2u}h = \frac{1}{F}h$ is a Hermitian metric on $M^2$ conformal to $h$ such that the Strominger holomorphic sectional curvature of $g$ is pointwise constant: $H^{s(g)}=c$ where $c$ is a function on $M^2$. Then by Proposition \ref{prop3}, we have
\begin{eqnarray*}
F (R^{s(g)}_{k\overline{\ell}i\overline{j}} - F R^{(h)}_{k\overline{\ell}i\overline{j}})& = & \delta_{i\ell}  \delta_{kj} (-\sum_r|F_r|^2) + \delta_{k\ell} (F_iF_{\overline{j}}) + \delta_{ij} (F_kF_{\overline{\ell}} - FF_{k\overline{\ell}} ) + \\
&& + \ \delta_{kj} (FF_{i\overline{\ell}} -F_iF_{\overline{\ell}})  +  \delta_{i\ell} (FF_{k\overline{j}} - F_kF_{\overline{j}}).
\end{eqnarray*}
Write $\lambda = \frac{1}{F}\sum_r |F_r|^2 + c$. In the above, by letting $(ijk\ell)=(1111)$ or $(2222)$,  or $(1211)$ and $(1112)$ and then add up,  we get
\begin{equation} \label{eq:20}
F_{1\overline{1}}=\lambda +F, \ \ \ \ \ F_{2\overline{2}}=\lambda -F, \ \ \ \ \ F_{1\overline{2}}=0,
\end{equation}
where subscripts stand for covariant derivatives in $(M^2,h)$ under the frame $e$.  We want to deduce a contradiction from these equations. Write $x=\lambda +F$ and $y=\lambda -F$. We have
$x_{\overline{2}} = F_{1\bar{1}\bar{2}}= F_{1\bar{2}\bar{1}} =0$. So $x$ is a function of $z_1$ and $\overline{z}_1$ along. Similarly, $y_{\overline{1}}=0$ and $y$ is a function of $z_2$ and $\overline{z}_2$ along. Since $x-y=2F$, we have
$$ \Delta x = \Delta_1 x = \Delta_1 (y+2F) = 2\Delta_1 F=2F_{1\overline{1}} = 2x.$$
Similarly, $\Delta y=-2y$. Since $M$ is compact, at the maximum point of $x$, we have $x_{max}=2\Delta x \leq 0$, hence $x\leq 0$. Similarly, $x_{min} = 2\Delta x \geq 0$ hence $x\geq 0$. This will force $x=0$ identically. Hence $\lambda =-F$,  $F_{1\overline{1}}=0$, and $F_{2\overline{2}}=-2F$. That is, $\Delta F = -2F$. Let $p$ be a point where $F$ reaches its minimum value. At this point, $\Delta F \geq 0$, hence $F_{min} = -\frac{1}{2}\Delta F \leq 0$. This of course will contradict with the fact that $F>0$ everywhere.

For case (3), when $(M^2,h)$ is an isosceles Hopf surface with $h$ the standard Hopf metric, and $g$ is a metric conformal to $h$ and having $H^s=f$ where $f$ is a function. Then by Proposition \ref{prop1}, we know that $g$ must be a constant multiple of one of the metric $g_A$, hence is an admissible metric. This completes the proof of Theorem \ref{thm1}.
\end{proof}

\vspace{0.3cm}

\section{In higher dimensions}

Now let us turn our attention to high dimensions. We want to explore the set of all non-K\"ahler Strominger (weak) space forms. At this point we know little about them except isosceles Hopf manifolds and Strominger flat manifolds. In this section, we will prove two non-existence theorems mimicking  \cite{LZ} and \cite{RZ} in the Chern connection case.

Let us begin with the class of {\em complex nilmanifolds} $(M^n,g)$. This means that the universal cover is a nilpotent Lie group $G$  equipped with a left-invariant complex structure $J$ and a compatible left-invariant metric $g$, and $(G,J,g)$ is the universal cover of $(M^n,g)$ as a Hermitian manifold.  Assume that $g$ has pointwise constant Strominger holomorphic sectional curvature, namely $H^s=f$ where $f$ is a function on $M$. We want to draw the conclusion that $G$ is abelian and $g$ is K\"ahler and flat, thus establishing the proof of Theorem \ref{thm3}.

We will follow the notations of \cite{VYZ}, \cite{ZZ1}, and \cite{LZ}. Let ${\mathfrak g}$ be the Lie algebra of $G$. Left-invariant metric or complex structure on $G$ corresponds to inner product or complex structure on on ${\mathfrak g}$. The latter means linear transformation $J$ on ${\mathfrak g}$ satisfying $J^2=-I$ and the integrability condition
\begin{equation*}
[x,y]- [Jx,Jy] +J [Jx, y] + J[x,Jy] =0, \ \ \ \ \forall \ x,y \in {\mathfrak g}.
\end{equation*}
We will extend $J$ and $g=\langle \,,\,\rangle$ linearly over ${\mathbb C}$ to the complexification ${\mathfrak g}^{\mathbb C}$  of ${\mathfrak g}$, and we have the decomposition ${\mathfrak g}^{\mathbb C} = {\mathfrak g}' \oplus {\mathfrak g}''$ into the $(1,0)$ and $(0,1)$ parts, where ${\mathfrak g}'' = \overline{{\mathfrak g}'}$, and
$${\mathfrak g}'= \{ x-\sqrt{-1}Jx \mid x\in {\mathfrak g}\} . $$
Let $e=\{ e_1, \ldots , e_n\}$ be a unitary basis of ${\mathfrak g}'$. We will follow the notations in \cite{VYZ} (see also \cite{YZ1} and \cite{ZZ1}), and denote by
\begin{equation} \label{CandD}
C_{ik}^j = \langle [e_i,e_k] , \,\overline{e_j} \rangle , \ \ \ \ \ D_{ik}^j = \langle  [\overline{e_j}, e_k] , e_i \rangle
\end{equation}
for any $1\leq i,j,k\leq n$. By the integrability of $J$ we have
\begin{equation*}
[e_i,e_k] = \sum_{j=1}^n C_{ik}^j e_j , \ \ \ \ \ [\overline{e_j}, e_k] = \sum_{i=1}^n \big( D_{ik}^j \overline{e_i} - \overline{D^k_{ij}} e_i  \big).
\end{equation*}
Denote by $\varphi$ the coframe dual to $e$, then we have the structure equation
\begin{equation*}
d\varphi_i = -\sum_{j,k=1}^n \big( \frac{1}{2} C^i_{jk}\varphi_j\wedge \varphi_k  + \overline{D^j_{ik}} \varphi_j \wedge \overline{\varphi}_k \big)
\end{equation*}
and the first Bianchi identity which now takes the form
\begin{eqnarray*}
& & \sum_{r=1}^n \big( C^r_{ij}C^{\ell}_{rk} + C^r_{jk}C^{\ell}_{ri} + C^r_{ki}C^{\ell}_{rj} \big) \ = \ 0  \label{CC} \\
& & \sum_{r=1}^n \big( C^r_{ik}D^{\ell}_{jr} + D^r_{ji}D^{\ell}_{rk} - D^r_{jk}D^{\ell}_{ri} \big) \ = \ 0  \label{CD} \\
& & \sum_{r=1}^n \big( C^r_{ik}\overline{D^{r}_{j\ell }} - C^j_{rk}\overline{D^{i}_{r\ell }} + C^j_{ri}\overline{D^{k}_{r\ell }}  - D^{\ell}_{ri}\overline{D^{k}_{jr }} + D^{\ell}_{rk} \overline{ D^{i}_{jr }}  \big) \ = \ 0  \label{CDbar}
\end{eqnarray*}
for any $1\leq i,j,k,\ell \leq n$. Under the frame $e$, the Chern connection form and torsion components are
\begin{equation*}
\theta_{ij}=\sum_{k=1}^n \big( D^j_{ik}\varphi_k  -\overline{D^i_{jk}} \overline{\varphi_k} \big), \ \ \ \     2T_{ik}^j = - C_{ik}^j - D_{ik}^j + D_{ki}^j.   \label{T}
\end{equation*}
From this, we get the expression for $\gamma$ and the Strominger connection matrix
\begin{equation*}
\theta^s_{ij}=\sum_{k=1}^n \{ (-C^j_{ik} + D^j_{ki}) \varphi_k  + ((\overline{ C^i_{jk}} - \overline{D^i_{kj}} ) \, \overline{\varphi}_k \} .
\end{equation*}
Taking the $(1,1)$ part in $\Theta^s=d\theta^s - \theta^s  \wedge \theta^s $, we get
\begin{eqnarray}
 R^s_{k\overline{\ell}i\overline{j}} & = & \big(C^r_{ik}\overline{C^r_{j\ell } } - C^j_{rk}\overline{C^i_{r\ell } }\big) - \big( C^r_{ik} \overline{D^r_{\ell j} } + \overline{C^r_{j\ell} } D^r_{ki}  \big) + \big( C^j_{ir} \overline{D^k_{r\ell } } - C^j_{kr} \overline{ D^i_{\ell r} } \big) + \nonumber \\
&& + \ \big( \overline{ C^i_{jr}} D^{\ell}_{rk }  - \overline{C^i_{\ell r} } D^{j}_{kr}  \big) - \big( D^j_{ri} \overline{ D^k_{r\ell }} + D^{\ell}_{rk}  \overline{D^i_{rj} }  \big) + \big( D^r_{ki} \overline{ D^r_{\ell j}} - D^{j}_{kr}   \overline{D^i_{\ell r} }  \big) \label{LieRs}
\end{eqnarray}
for any $i,j,k,\ell$. Here $r$ is summed up from $1$ to $n$. Taking the symmetrization, we get the following:

\begin{proposition} \label{LieRshat}
Let $(G,J,g)$ be an even-dimensional Lie group equipped with a left-invariant complex structure and a compatible metric. Let $e$ be a unitary frame of ${\mathfrak g}'$ and $C$, $D$ be defined by (\ref{CandD}). Then under $e$ the ($(1,1)$-part of) Strominger curvature components are given by (\ref{LieRs}) and their symmetrization are given by
\begin{eqnarray}
 4\widehat{R}^s_{k\overline{\ell}i\overline{j}} & = & - \big( C^j_{rk}\overline{C^i_{r\ell } } + C^j_{ri}\overline{C^k_{r\ell } } + C^{\ell}_{rk}\overline{C^i_{rj } } + C^{\ell}_{ri}\overline{C^k_{rj } } \big)
 + \big( C^j_{ir}\overline{D^k_{r\ell } } + C^j_{kr}\overline{D^i_{r\ell } } + C^{\ell}_{ir}\overline{D^k_{rj } } + C^{\ell}_{kr}\overline{D^i_{rj } } \big)  \nonumber \\
 && - \, \big( C^j_{ir}\overline{D^k_{\ell r} } + C^j_{kr}\overline{D^i_{\ell r} } + C^{\ell}_{ir}\overline{D^k_{jr } } + C^{\ell}_{kr}\overline{D^i_{jr } } \big)
 + \big( \overline{C^i_{jr}} D^{\ell}_{rk} + \overline{C^k_{jr}} D^{\ell}_{ri} + \overline{C^i_{\ell r}} D^{j}_{rk} + \overline{C^k_{\ell r}} D^{j}_{ri} \big) \nonumber \\
 && - \, \big( \overline{C^i_{jr}} D^{\ell}_{kr} + \overline{C^k_{jr}} D^{\ell}_{ir} + \overline{C^i_{\ell r}} D^{j}_{kr} + \overline{C^k_{\ell r}} D^{j}_{ir} \big)
 -2 \big( D^j_{ri}\overline{D^k_{r\ell } } + D^j_{rk}\overline{D^i_{r\ell } } + D^{\ell}_{ri}\overline{D^k_{rj } } + D^{\ell}_{rk}\overline{D^i_{rj } } \big)  \label{LieRshat} \\
 &&  + \,\big( D^r_{ki}\overline{D^r_{\ell j} } + D^r_{ik}\overline{D^r_{\ell j} } + D^{r}_{ki}\overline{D^r_{j\ell } } + D^{r}_{ik}\overline{D^r_{j\ell } } \big)   - \, \big( D^j_{ir}\overline{D^k_{\ell r} } + D^j_{kr}\overline{D^i_{\ell r} } + D^{\ell}_{ir}\overline{D^k_{jr } } + D^{\ell}_{kr}\overline{D^i_{jr } } \big) \nonumber
\end{eqnarray}
for any $1\leq i,j,k,\ell \leq n$, where $r$ is summed up.
\end{proposition}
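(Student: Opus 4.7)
The plan is a direct calculation. The explicit formula for $\theta^s$ displayed just above the statement, together with the structure equation
$d\varphi_i = -\sum_{j,k}\bigl(\tfrac12 C^i_{jk}\varphi_j\wedge\varphi_k + \overline{D^j_{ik}}\varphi_j\wedge\overline{\varphi}_k\bigr)$
and its complex conjugate, determine the $(1,1)$-part of $\Theta^s = d\theta^s - \theta^s \wedge \theta^s$ completely. Only the $\varphi\wedge\overline{\varphi}$ contribution to each $d\varphi_k$ and $d\overline{\varphi}_k$ feeds $(d\theta^s)^{1,1}$, and $(\theta^s\wedge\theta^s)^{1,1}$ is the cross product of the $(1,0)$- and $(0,1)$-parts of $\theta^s$. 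Reading off the coefficient of $\varphi_k\wedge\overline{\varphi}_\ell$ in the $(i,j)$-entry of $\Theta^s$ and sorting by type ($CC$, $C\overline{D}$, $\overline{C}D$, and $D\overline{D}$) produces formula (\ref{LieRs}).

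For the second half, the plan is to apply the symmetrization operator
$\widehat{P}_{k\overline{\ell}i\overline{j}} = \tfrac14\bigl(P_{k\overline{\ell}i\overline{j}} + P_{i\overline{\ell}k\overline{j}} + P_{k\overline{j}i\overline{\ell}} + P_{i\overline{j}k\overline{\ell}}\bigr)$
to each of the twelve monomials in (\ref{LieRs}). The antisymmetry $C^j_{ik} = -C^j_{ki}$ causes three of them to drop out immediately: the monomials $C^r_{ik}\overline{C^r_{j\ell}}$, $C^r_{ik}\overline{D^r_{\ell j}}$, and $\overline{C^r_{j\ell}}D^r_{ki}$ all symmetrize to zero, because in each case the lower-index pair of one of the $C$ (or $\overline{C}$) factors is exactly one of the index pairs being swapped, so the four permutations cancel pairwise. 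In the $D\overline{D}$ block, the two monomials $D^j_{ri}\overline{D^k_{r\ell}}$ and $D^\ell_{rk}\overline{D^i_{rj}}$ are related by the joint swap $i\leftrightarrow k$, $j\leftrightarrow\ell$, so their symmetrizations are the \emph{same} four-term family; together with their shared minus sign, this produces the coefficient $-2$ appearing in (\ref{LieRshat}). The remaining seven monomials of (\ref{LieRs}) each generate a distinct four-term family, which together with the paired $D\overline{D}$ group account for exactly the eight parenthesized groups displayed in (\ref{LieRshat}).

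The computation is mechanical, so the only genuine obstacle is bookkeeping: systematically tracking the four index permutations for every monomial and recognizing the coincidences described above. A useful internal check is that each of the eight groups in (\ref{LieRshat}) must be invariant under both $i\leftrightarrow k$ and $j\leftrightarrow\ell$ by construction; moreover the full expression $4\widehat{R}^s_{k\overline{\ell}i\overline{j}}$ must be Hermitian, i.e.\ its complex conjugate equals the same expression with $(k,\ell,i,j)$ replaced by $(\ell,k,j,i)$. These built-in symmetries catch any sign or index-placement errors that slip through.
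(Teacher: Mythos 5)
Your proposal is correct and follows essentially the same route as the paper: read off the $(1,1)$-part of $\Theta^s=d\theta^s-\theta^s\wedge\theta^s$ from the structure equations to get (\ref{LieRs}), then apply the symmetrization over $i\leftrightarrow k$ and $j\leftrightarrow\ell$ term by term. Your bookkeeping is accurate — the three monomials containing $C^r_{ik}$ or $\overline{C^r_{j\ell}}$ do vanish by the antisymmetry of $C$ in its lower indices, and the pair $D^j_{ri}\overline{D^k_{r\ell}}$, $D^\ell_{rk}\overline{D^i_{rj}}$ merges to give the coefficient $-2$, yielding exactly the eight groups of (\ref{LieRshat}).
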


In particular, letting $i=j$ and $k=\ell$, we have
\begin{eqnarray}
 4\widehat{R}^s_{k\overline{k}i\overline{i}} & = & - \,\big( |C^i_{rk}|^2 + |C^k_{ri}|^2 + 2\mbox{Re} \{ C^{i}_{ri}\overline{C^k_{rk } }\}  \big)  + 2\mbox{Re} \{ \overline{C^i_{ir}} (D^k_{rk}-D^k_{kr}) + \nonumber  \\
 && +\, \overline{C^k_{kr}} (D^i_{ri}-D^i_{ir}) + \overline{C^i_{kr}} (D^k_{ri}-D^k_{ir}) + \overline{C^k_{ir}} (D^i_{rk}-D^i_{kr}) \}+ \label{LieRshatik}  \\
 && -\,   2\big( |D^i_{rk}|^2 + |D^k_{ri}|^2 + 2\mbox{Re} \{ D^{i}_{ri}\overline{D^k_{rk } } \} \big) + \big( |D^r_{ki}|^2 + |D^r_{ik}|^2 + 2\mbox{Re} \{ D^{r}_{ik}\overline{D^r_{ki } } \} \big) \nonumber \\
 && - \, \big( |D^i_{kr}|^2 + |D^k_{ir}|^2 + 2\mbox{Re} \{ D^{i}_{ir}\overline{D^k_{kr } } \} \big) \nonumber \\
  \widehat{R}^s_{i\overline{i}i\overline{i}} & = &    - |C^i_{ri}|^2  + 2\mbox{Re} \{ \overline{C^i_{ir}} (D^i_{ri}-D^i_{ir})\} -2  |D^i_{ri}|^2 + |D^r_{ii}|^2 - |D^i_{ir}|^2 \label{LieRshatii}
\end{eqnarray}

Note that so far we have  not used the assumption that $G$ is nilpotent yet. For nilpotent groups, a famous result of Salamon \cite[Theorem 1.3]{Salamon} states the following:

\begin{theorem} [Salamon] Let $G$ be a nilpotent Lie group of dimension $2n$ equipped with a left invariant complex structure. Then there exists a coframe $\varphi =\{ \varphi_1, \ldots , \varphi_n\}$ of left invariant $(1,0)$-forms on $G$ such that $$ d\varphi_1 =0, \ \ \ d\varphi_i = {\mathcal I} \{\varphi_1, \ldots , \varphi_{i-1}\} , \ \ \ \forall \ 2\leq i\leq n, $$
where ${\mathcal I}$ stands for the ideal in  exterior algebra of the complexified cotangent bundle generated by those $(1,0)$-forms.
\end{theorem}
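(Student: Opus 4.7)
The plan is to induct on the complex dimension $n$. The base case $n=1$ is vacuous: a two-real-dimensional nilpotent Lie algebra is abelian, so every nonzero $(1,0)$-form is automatically closed. For the inductive step, the main task is to produce a single closed $(1,0)$-form $\varphi_1$. Since $d\alpha(X,Y)=-\alpha([X,Y])$ on a Lie algebra, a $(1,0)$-form (viewed as a $\mathbb{C}$-linear functional on $\mathfrak{g}^{\mathbb{C}}$ vanishing on $\overline{\mathfrak{g}'}$) is closed iff it annihilates $\pi_{1,0}[\mathfrak{g}^{\mathbb{C}},\mathfrak{g}^{\mathbb{C}}]$. Using the identity $i\,\pi_{1,0}(v)=\pi_{1,0}(Jv)$, a direct computation shows this projection equals $\pi_{1,0}\bigl([\mathfrak{g},\mathfrak{g}]+J[\mathfrak{g},\mathfrak{g}]\bigr)$, so the existence of a closed $(1,0)$-form is equivalent to the
\begin{center}
\emph{Key Lemma:} \ $[\mathfrak{g},\mathfrak{g}]+J[\mathfrak{g},\mathfrak{g}] \neq \mathfrak{g}$.
\end{center}

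To prove the Key Lemma --- the only step using nilpotency and integrability in an essential way --- I would introduce the $J$-invariant ascending central series
\[
 U_0 = 0, \qquad U_{k+1}=\{X\in\mathfrak{g}\,:\,[X,\mathfrak{g}]\subseteq U_k \text{ and }[JX,\mathfrak{g}]\subseteq U_k\},
\]
an ascending chain of $J$-invariant ideals. One checks inductively that each quotient $U_{k+1}/U_k$ is the largest $J$-invariant subspace of the center $\mathfrak{z}(\mathfrak{g}/U_k)$. The crucial input, powered by the Nijenhuis identity $[JX,JY]=[X,Y]+J[JX,Y]+J[X,JY]$, is that on a nonzero nilpotent Lie algebra with integrable $J$ this $J$-invariant central subspace is always nonzero; iterating, $U_K=\mathfrak{g}$ for some minimal $K\geq 1$. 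By $J$-invariance of $U_{K-1}$ and the defining property of $U_K$, one obtains $[\mathfrak{g},\mathfrak{g}]+J[\mathfrak{g},\mathfrak{g}]\subseteq U_{K-1}\subsetneq\mathfrak{g}$, which is the Key Lemma.

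With $\varphi_1$ in hand, set $\mathfrak{h}:=\ker(\mathrm{Re}\,\varphi_1)\cap\ker(\mathrm{Im}\,\varphi_1)$, a $J$-invariant real codimension-$2$ subspace containing $[\mathfrak{g},\mathfrak{g}]$, hence a $J$-invariant ideal. As a nilpotent Lie algebra of complex dimension $n-1$ with integrable $J|_{\mathfrak{h}}$, the inductive hypothesis furnishes $(1,0)$-forms $\psi_2,\ldots,\psi_n$ on $\mathfrak{h}$ with $d\psi_i\in\mathcal{I}\{\psi_2,\ldots,\psi_{i-1}\}$. Pick a $J$-invariant complex complement $V$ of $\mathfrak{h}$ in $\mathfrak{g}$ and extend each $\psi_i$ to $\varphi_i$ on $\mathfrak{g}$ by declaring $\varphi_i|_V=0$. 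Because $[\mathfrak{g},\mathfrak{g}]\subseteq\mathfrak{h}$, the value $d\varphi_i(X,Y)=-\psi_i([X,Y])$ only sees the $\mathfrak{h}$-component of $[X,Y]$, and any monomial of $d\varphi_i$ in the expansion with respect to $\{\varphi_j,\overline{\varphi_j}\}$ that does not already belong to $\mathcal{I}\{\psi_2,\ldots,\psi_{i-1}\}$ must pair against the $V$-direction and therefore carry a factor of $\varphi_1$, landing in $\mathcal{I}\{\varphi_1,\ldots,\varphi_{i-1}\}$; this closes the induction.

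The hard part will be the Key Lemma. The naive dimension estimate $\dim[\mathfrak{g},\mathfrak{g}]\leq 2n-2$ only yields $\dim([\mathfrak{g},\mathfrak{g}]+J[\mathfrak{g},\mathfrak{g}])\leq 4n-4$, which is vacuous for $n\geq 2$, so integrability of $J$ must genuinely enter. Propagating nonzeroness of the $J$-invariant central subspace along the chain $\mathfrak{g}/U_k$ is where one must carefully exploit the Nijenhuis identity; without integrability the statement fails, as non-integrable almost complex structures on suitable two-step nilpotent algebras can satisfy $[\mathfrak{g},\mathfrak{g}]+J[\mathfrak{g},\mathfrak{g}]=\mathfrak{g}$.
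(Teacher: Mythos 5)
The paper does not prove this statement; it is quoted from \cite{Salamon} (Theorem 1.3 there), so your argument has to be judged on its own, and it has a fatal gap exactly at the step you yourself flag as the hard part. Your reduction of the existence of a closed $(1,0)$-form to the Key Lemma $[\mathfrak{g},\mathfrak{g}]+J[\mathfrak{g},\mathfrak{g}]\neq\mathfrak{g}$ is correct, and the Key Lemma is in fact true; but your proposed proof of it cannot work. The termination $U_K=\mathfrak{g}$ of your ascending series is not a consequence of integrability: it is verbatim the definition of $J$ being \emph{nilpotent} in the sense of Cordero--Fern\'andez--Gray--Ugarte, a strictly stronger condition than integrability --- precisely the distinction this paper draws between conditions (\ref{Salamon}) and (\ref{CFGU}), and the reason Theorem \ref{thm3} carries the extra hypothesis ``nilpotent $J$''. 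Your ``crucial input'', that a nonzero nilpotent algebra with integrable $J$ always has a nonzero $J$-invariant subspace of its center, is false: the $6$-dimensional nilpotent Lie algebra $\mathfrak{h}_{19}^{-}$, with structure equations $(0,0,0,12,23,14-35)$, has one-dimensional center, so $\mathfrak{z}\cap J\mathfrak{z}=0$ for \emph{every} almost complex structure, yet it admits integrable complex structures (necessarily non-nilpotent ones). So your chain $U_k$ can stall at $0$, and no manipulation of the Nijenhuis identity will restart it; in particular the quotients $\mathfrak{g}/U_k$ you propose to induct along need not ever become trivial. Salamon's actual mechanism is different: he works dually with the filtration $W_0=0$, $W_i=\{\alpha\in\mathfrak{g}^{*}:d\alpha\in\Lambda^2W_{i-1}\}$ (the annihilators of the descending central series, which \emph{do} exhaust $\mathfrak{g}^{*}$ by nilpotency alone) and extracts $(1,0)$-forms adapted to it using integrability in the form $d\bigl(\Lambda^{1,0}\bigr)\subseteq\Lambda^{2,0}\oplus\Lambda^{1,1}$; the closed form $\varphi_1$ comes from that interplay, not from a $J$-adapted ascending central series.

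There is a second, independent gap in your inductive step even if $\varphi_1$ is granted. Extending the coframe $\psi_2,\ldots,\psi_n$ of $\mathfrak{h}=\ker\varphi_1$ by zero on a $J$-invariant complement does not force $d\varphi_i\in\mathcal{I}\{\varphi_1,\ldots,\varphi_{i-1}\}$: besides monomials containing $\varphi_1$ and those restricting into $\mathcal{I}\{\psi_2,\ldots,\psi_{i-1}\}$, the expansion of $d\varphi_i$ may contain $(1,1)$ monomials of the form $\overline{\varphi}_1\wedge\varphi_k$ with $k\geq i$, coming from $d\varphi_i(\overline{Z},A)=-\psi_i([\overline{Z},A])$ with $\overline{Z}$ in the $(0,1)$-part of the complement and $A\in\mathfrak{h}^{1,0}$. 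Integrability only kills the $(0,2)$ terms $\overline{\varphi}_1\wedge\overline{\varphi}_k$, and the inductive hypothesis says nothing about brackets of complementary directions with $\mathfrak{h}$; since the ideal is generated by the $(1,0)$-forms $\varphi_1,\ldots,\varphi_{i-1}$ only, such terms do not lie in it. Ruling them out requires choosing the coframe adapted to the filtration $W_i$ (equivalently, controlling the coefficients $D^k_{i1}$ with $k\geq i$), which is again the real content of Salamon's proof rather than a routine extension-by-zero.
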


When $g$ is a compatible left-invariant metric, clearly one can choose the above coframe $\varphi$ so that it is also unitary. In terms of the structure constants $C$ and $D$ given by (\ref{CandD}), this means
\begin{equation}
C^j_{ik}=0  \ \ \ \mbox{unless} \ \ j>i \ \mbox{or} \ j>k; \ \ \ \ \ D^j_{ik}=0  \ \ \ \mbox{unless} \ \ i>j.  \label{Salamon}
\end{equation}

If the complex structure $J$ is nilpotent in the sense of  Cordero, Fern\'{a}ndez, Gray and Ugarte \cite{CFGU}, then there exists invariant unitary coframe $\varphi$ so that
\begin{equation}
C^j_{ik}=0  \ \ \ \mbox{unless} \ \ j>i \ \mbox{and} \ j>k; \ \ \ \ \ D^j_{ik}=0  \ \ \ \mbox{unless} \ \ i>j \ \mbox{and} \ i>k.  \label{CFGU}
\end{equation}

While it is an interesting question to know whether there are non-K\"ahler Strominger space forms amongst complex nilmanifolds, namely, does (\ref{Salamon}) and $H^s=c$ imply $C=D=0$? We do not know the answer of that at this point, so we settle for the easier case, namely, proving that (\ref{CFGU}) plus $H^s=c$ does imply $C=D=0$.

\begin{proof}[\textbf{Proof of Theorem \ref{thm3}}]
Let $(G,J,g)$ be a nilpotent Lie group equipped with a left-invariant complex structure and compatible left-invariant metric $g$. Assume that the complex structure $J$ is nilpotent in the sense of Cordero, Fern\'{a}ndez, Gray and Ugarte in \cite{CFGU}. We have unitary coframe $\varphi$ such that $C$ and $D$ satisfy (\ref{CFGU}).

Assuming that the Strominger holomorphic sectional curvature is pointwise constant, namely, $H^s=f$ for some function $f$. This is equivalent to
$$ \widehat{R}^s_{k\overline{\ell }i\overline{j}} = \frac{f}{2}\big( \delta_{ij} \delta_{k\ell} + \delta_{i\ell } \delta_{kj} \big) $$
under any unitary frame. In particular, $\widehat{R}^s_{k\overline{k }i\overline{i}} = \frac{f}{2}\big( 1 + \delta_{ik} \big)$. Utilizing (\ref{CFGU}), the formula (\ref{LieRshatii}) now gives us
$$ f =  -2\sum_{r>i} |D^i_{ri}|^2 $$
for each $1\leq i\leq n$. Letting $i=n$, we see that $f\equiv 0$, so the above equality gives us $D^i_{\ast i}=0$ for each $i$. Similarly, for any $i<k$, by utilizing (\ref{CFGU}) and (\ref{LieRshatik}), together with the fact that $f=0$ and $D^j_{\ast j}=0$ for any $j$, we get
\begin{equation}
\label{eq:nilD}
\sum_{r<k} |C^k_{ri}-D^i_{kr}|^2 + 2\sum_{r>k} \big( |D^i_{rk}|^2 + |D^k_{ri}|^2  \big)\ =\ \sum_{r<k}|D^r_{ki}|^2, \ \ \ \ \ \forall \ i<k.
\end{equation}
Let $k=2$. Then the right hand side is zero, hence the left hand side must also be zero which leads to $D^1_{\ast 2}=D^2_{\ast 1}=0$. Therefore we have $D^{\ast}_{3\ast}=0$. Now let $k=3$ in (\ref{eq:nilD}), the right hand side is zero, so the left hand side must be zero which implies $D^{\ast}_{4\ast}=0$. Repeat this process, we know eventually that $D=0$. By (\ref{eq:nilD}) again, we get $C=0$.  So $G$ is the abelian group, and $g$ is K\"ahler and flat. This completes the proof of Theorem \ref{thm3}.
\end{proof}

\begin{proof}[\textbf{Proof of Theorem \ref{thm4}}]
Let $(M^n,g)$ be a Strominger K\"ahler-like (SKL) manifold with pointwise constant Strominger holomorphic sectional curvature. If $g$ is not K\"ahler, then by \cite{YZZ} we know that locally there always exist admissible frames, which are unitary frames with $\nabla^s e_n=0$, and $R^s_{i\overline{j}k\overline{n}}=0$ for any $1\leq i,j,k\leq n$. The SKL condition implies that $\widehat{R}^s_{i\overline{j}k\overline{\ell}} = R^s_{i\overline{j}k\overline{\ell}}$, and the pointwise constant Strominger holomorphic sectional curvature assumption means that there is a function $f$ on $M$ such that $\widehat{R}^s_{i\overline{j}k\overline{\ell}} = \frac{f}{2}(\delta_{ij}\delta_{k\ell} + \delta_{i\ell}\delta_{kj})$. So by $R^s_{n\overline{n}n\overline{n}}=0$, we get $f\equiv 0$, hence $R^s= \widehat{R}^s =0$, that is, $g$ is Strominger flat. This completes the proof of Theorem \ref{thm4}.
\end{proof}

\vs

\noindent\textbf{Acknowledgments.} The second named author would like to thank Haojie Chen, Xiaolan Nie, Kai Tang, Bo Yang, and Quanting Zhao for helpful discussions.

\vs

\end{document}